%% file: R-1352-DIE-00.tex
 \documentclass[11pt,twoside,reqno,centertags]{amsart}

 \PII{} 
\copyrightinfo{}{}

\thanks{AMS Subject Classifications:  35Q31, 35Q35, 76B55, 76U60.}

\usepackage{amsmath,amsthm,amsfonts,amssymb}
\usepackage[mathscr]{euscript}
\usepackage[hidelinks]{hyperref} 
\usepackage{mathrsfs} 
\usepackage{graphicx}
\usepackage{color}
\usepackage{epsfig}

\usepackage[T1]{fontenc}
\usepackage{textcomp}
\usepackage{amsfonts}
\usepackage{amssymb,float,bm}
\usepackage{tikz} 
\usepackage{caption}
\usepackage{subcaption}
\usepackage{mathtools}
\usepackage[overload]{empheq}

\newcommand{\sech}{\text{sech\,}}
\newtheorem{thm}{Theorem}[section]

\newtheorem{lemma}[thm]{Lemma}

\theoremstyle{remark}

\newtheorem{theorem}{Theorem}[section]

\newtheorem{proposition}[theorem]{Proposition}
\newtheorem{corollary}[theorem]{Corollary}

\definecolor{azure}{rgb}{0.0, 0.5, 1.0} 
\definecolor{bleudefrance}{rgb}{0.16, 0.32, 0.75} 
\definecolor{ballblue}{rgb}{0.13, 0.67, 0.8} 

\numberwithin{equation}{section}   

\begin{document}
 
\title{ Flow kinematics for equatorial coupled surface and internal waves }
\thanks{Accepted for publication: February 2026.} 
\date{}
\maketitle     
 
\vspace{ -1\baselineskip}

{\small
\begin{center}
{\sc David Henry}\\
School of Mathematical Sciences, University College Cork, Cork, Ireland\\[10pt]
{\sc Rossen Ivanov}\\
School of Mathematics and Statistics, 
Technological University Dublin\\
Grangegorman Lower, D07 ADY7 Dublin, Ireland\\[10pt]
{\sc Gabriele Villari}\\
Dipartimento di Matematica e informatica ``Ulisse Dini" \\
Universit\`a degli Studi di Firenze\\
viale Morgagni, 67/A, 50137 Firenze, Italy \\[10pt]
 (Submitted by: Adrian Constantin)  
\end{center}
}

\numberwithin{equation}{section}
\allowdisplaybreaks

 \smallskip

 \begin{quote}
\footnotesize
{\bf Abstract.}  
We study the propagation of coupled surface and internal equatorial internal waves. 
A model of two vertically stratified fluid layers with different constant densities is employed. Taking Coriolis 
forces into account, we derive explicit solutions to
the linearized governing equations which 
assumes irrotational fluid motion in both layers
separately, and further obtain the dispersion
 relation which determines the phase speeds
of propagating surface and internal waves. 
We prove a result on solutions to the 
dispersion relations which greatly simplifies our 
subsequent analysis of the nonlinear dynamical
systems which describe the motion of the 
fluid in the upper layer. Phase portraits for all
possible streamlines in both fluid layers are 
presented, while furthermore a Lagrangian
description of the fluid flow is obtained, 
and the particle trajectories of the fluid particles
are determined.

\end{quote}

\section{Introduction}
Wave motion is a fascinating and highly perplexing field of scientific study, and this is no more evident than in the range of mathematical challenges it presents. One aspect of wave motion which has seen significant progress in recent years is in the theoretical understanding of the underlying flow which is induced by various wave motions. In a sense, this subject dates from the time of Stokes (mid 1800s, see \cite{Craik}) and his observations regarding the mean drift experienced by fluid particles as a wave propagates on the surface of water: the topic of Stokes' drift are still an active area of research with many open questions \cite{Bremer,Buhler}. Most studies of the underlying flow generated by wave motion concern the mean properties of the kinematics,  whereby  an averaging process is first necessarily employed in order to reduce the complexity of the overall problem. 

Determining the motion of individual fluid particles as a fluid body evolves due to wave propagation requires a delicate analysis of the underlying mathematical structure \cite{Abraskin,Bennett}. Mathematical advances enabled significant progress in this area in the past couple of decades  for waves on the surface of a single homogeneous layer of fluid (see \cite{Con-11,Con-15} for an overview). We note that, while this work extends to the fully nonlinear regime of exact solutions to the Euler equations which govern the wave motion (see again \cite{Con-11,Con-15} for an overview), there are still a number of simplifications which must be invoked, such as periodicity and, in the nonlinear setting, irrotationality.

Due to the technical complexities inherent in their analysis,  no such detailed studies had existed concerning the flow induced by internal wave motion on a multi-layered fluid body until the recent work \cite{HV-JDE,HV-AN}, which applied a phase-plane approach to the study of the nonlinear dynamical systems which govern the motion of coupled surface and internal wave solutions arising from the linearised governing equations. It is  assumed in  \cite{HV-JDE,HV-AN} that the  fluid layers are separately irrotational, although  the forthcoming works \cite{HIS-JDE,HIS-pre} show that the incorporation of vorticity in the analysis is also possible. We note that  these above cited works allow for fluctuations of the free-surface, which provides a more realistic and intricate fluid model in contrast to the `rigid-lid' approximation that is typically invoked in the analysis of two-layered fluids (see discussions in \cite{CI-19,CIM,CGK}).

In this paper, we show that the phase-plane, and particle trajectory, analysis of \cite{HV-JDE,HV-AN} can be extended to incorporate Coriolis effects modelling the Earth's rotation in the Equatorial region. The effect of Coriolis forces are appreciable for large scale flows, as shown in Section \ref{SecDisp} (cf. \cite{CJ15}). 
We restrict our analysis to the Equatorial region for a number of reasons.
 Stratification in the equatorial zone is more pronounced than in any other oceanic area \cite{FB, Gill}: an upper shallow layer of relatively warm water (and thus of lower density)  is separated from a deeper layer of colder, denser water by a very thin layer of the so-called pycnocline (or thermocline) where the density (and the temperature) gradient has a maximum. The density difference across the thermocline is $1\%$- $2\%$, and therefore it is reasonable to model the ocean stratification with two superimposed layers of constant density:  3–4 m high gravity waves are common on the surface of the ocean, while large internal waves (with heights in excess of 30 m) propagate as oscillations of the thermocline. 
Equatorial waves also exhibit particular dynamics due to the vanishing of the Coriolis parameter along the Equator, which effectively acts as a waveguide in the sense that disturbances are trapped in the vicinity of the Equator \cite{Gill, J18}. Field data shows that the meridional variations are very small, therefore it is reasonable to use a latitude-independent flow in the $f$-plane approximation with a vanishing meridional velocity.
This fact effectively allows to analyse the equatorial flow in a two dimensional plane, whose horizontal axis spans along the Equator.   
 
 The structure of the paper is as follows. In Section \ref{sec:2}, we introduce the governing equations for the geophysical equatorial flow of two fluid layers in the $f$-plane approximation.  In Section \ref{sec:3} we present the linearisation of the governing equations using a Hamiltonian formulation, and derive the linear wave solutions. In Section \ref{SecDisp} we derive, and analyse, dispersion relations for coupled surface and internal waves. In this section we prove a --- to our knowledge --- new result, Theorem \ref{THM}, which provides a  bound for solutions of the dispersion relations expressed in terms of  $A$ (the nondimensional parameter featuring the ratio of wave amplitudes), namely that $A\geq1$ must always hold. An important consequence of Theorem \ref{THM} is that it eliminates as a possibility one of the scenarios for motion of the upper fluid layer that was addressed in \cite{HV-JDE,HV-AN}.  In Section \ref{SecDyn} we present the nonlinear dynamical systems which describe the motion of the fluid particles in each layer, and in Section  \ref{secPP}  show that the phase-plane analysis of \cite{HV-JDE,HV-AN} applies to these. 
Finally, in Section \ref{TRAJ} a Lagrangian description of the fluid motion is provided, whereby the particle trajectories are elucidated.

\section{Governing equations }\label{sec:2}
This paper considers wave motion in the Equatorial region, which is assumed to be effectively two-dimensional due to Coriolis terms acting as a natural waveguide.
We work in Cartesian coordinates, with the positive $x-$coordinate representing the longitudinal direction, and the $y$ coordinate the local vertical direction.
The density  $\bm{\rho}$ of the fluid is assumed to be piecewise constant being distributed in the following way: we assume that there is an upper fluid layer of constant density $\rho_1$,
$$\Omega_1(\eta,\eta_1):=\{(x,y):x\in [0,L],\eta(x,t)<y<h_1+\eta_1(x,t)\},$$
which lies above a lower fluid layer  of constant density $\rho$
$$\Omega(\eta):=\{(x,y):x\in [0,L],-h<y<\eta(x,t)\},$$ 
where  $\rho\equiv (r+1)\rho_1>\rho_1$ for some $r>0$ (typical oceanographic values for $r$ in the Equatorial region are $\mathcal O(10^{-3}$).
Here $y=h_1+\eta_1(x,t)$ is the free-surface, and  $y=\eta(x,t)$  denotes the internal interface, while $y=-h$ is the location of the (impermeable) flat bed. We assume that waves are periodic, with period $L>0$: $\eta(x+L,t)=\eta(x,t)$, $\eta_1(x+L,t)=\eta_1(x,t)$. Further, taking
\[  \int_0^L \eta(x,t) dx = \int_0^L \eta_1(x,t) dx =0 \] for all $t$ ensures that the lower fluid layer has mean-depth $h>0$, with the upper-layer having mean-depth $h_1>0$.
With respect to the above described stratification the velocity field can be split as
\begin{equation*}
\bm{u}(x,y,t):=
\left\{\begin{array}{ccc}  u(x,y,t), &{\rm in} & \Omega(\eta),\\ u_1(x,y,t), &{\rm in} &\Omega_1(\eta,\eta_1),\end{array}\right.
\end{equation*}
and 
\begin{equation*}
\bm{v}(x,y,t):=
\left\{\begin{array}{ccc} v(x,y,t), &{\rm in} & \Omega(\eta),\\ v_1(x,y,t), &{\rm in} &\Omega_1(\eta,\eta_1).\end{array}\right.
\end{equation*}
\begin{figure}[H]
\begin{center}
 \resizebox{.7\textwidth}{!}{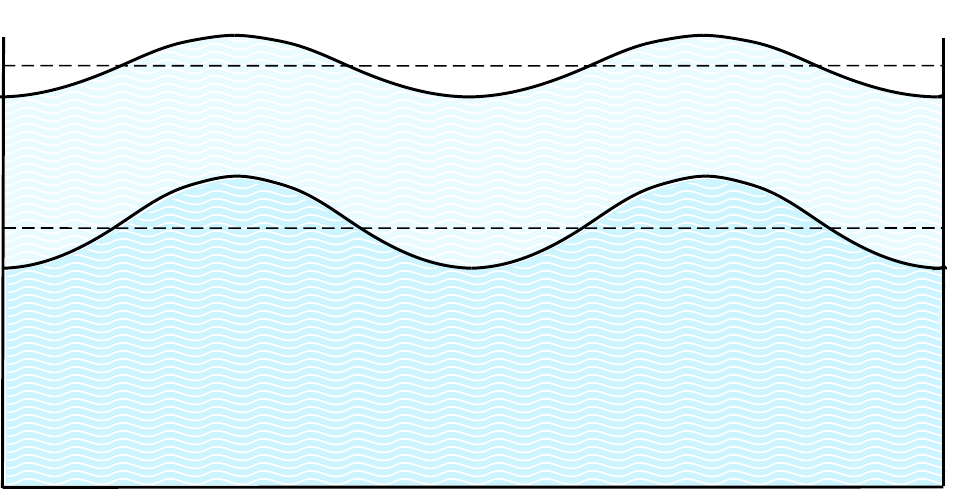}
\caption{Coupled ``in phase''  surface and internal water waves.}
\label{Fig1}
\end{center}
\end{figure}
Since inviscid flow is a natural assumption for ocean gravity waves, the equations of motion involve the Euler equation. We invoke the $f-$plane approximation for the full governing equations of geophysical fluid dynamics in a reference frame fixed at a point on the Earth's surface, and rotating with the Earth.
This approximation is applicable in the equatorial region when latitudinal variation about the equator is relatively small, and can be taken as fixed. The $f-$plane equations of motion are
\begin{equation*}
 \left\{\begin{array}{lcl}
        \bm{ u}_t+\bm{u}\bm{u}_x+\bm{v}\bm{u}_y+ f \bm{v} &=& -\bm{\frac{1}{\rho}}P_x,\\
         \bm{v}_t+\bm{u}\bm{v}_x+\bm{v}\bm{v}_y -f \bm{u} &=& -\bm{\frac{1}{\rho}}P_y-g,
        \end{array}\right.
\end{equation*}
where $P=P(x,y,t)$ denotes the pressure, $g$ is the gravitational acceleration, and $f=2\omega_E=1.5\cdot10^{-4}$ rad/s is the Coriolis parameter ($\omega_E=7.3\cdot10^{-5}$ rad/s is the Earth's angular velocity).
Typical values for oceanic flow speeds are $0.1$ m/s for the horizontal velocity components, and $10^{-5}$ m/s for the vertical velocity, and so Coriolis acceleration terms are about $10^{-6}$ m/s$^2$, with the centrifugal acceleration of (about $3 \times  10^{-2}$   m/s$^2$ ) several orders of magnitude larger, but still only 0.3\% of the gravitational acceleration. These numerical values indicate that the forces associated with the Earth’s rotation will only be relevant at large scales \cite{Constantin2021}. As we will see in Section \ref{SecDisp}, the impact of Coriolis forces on equatorial wave-motion is illustrated by the  fact that the difference between the magnitudes of the phase velocities for eastward and westward moving long-waves on the surface, for typical parameter values of the equatorial wave motion, is
\[
    | c_{+}^{(s)}|-| c_{-}^{(s)}   |= -0.66  \text{ m/s}.
\]
This difference is of the magnitude of the average speed of the Equatorial Undercurrent (EUC), which is of the order of 1 m/s.

The equation of incompressibility takes the form
\begin{equation}\label{masscons}
\bm{ u}_x+\bm{v}_y=0\,\,\textrm{in}\,\,\Omega\cup\Omega_1.
\end{equation}
Complementing the equations of motion are the free-boundary conditions. The dynamic boundary condition 
\begin{equation}\label{atm}
 P=P_{atm}\,\,{\textrm on}\,\, y=\eta_1 (x)+h_1,
\end{equation}
(with $P_{atm}$ the constant atmospheric pressure) decouples the motion of the water from that of the air. In addition to \eqref{atm} The kinematic boundary conditions
 reflect the impermeability of the flat bed $y=-h$, the interface $y=\eta(x,t)$, and the free surface $y=h_1+\eta_1(x,t)$, and are given by
\begin{equation}
 v=0 \quad{\rm on}\quad y=-h,
\end{equation}
 \begin{equation}
 \label{kin_int}
 \begin{array}{lll}
v_1=\eta_t +u_1 \eta_x & {\rm on} & y=\eta(x,t),\\
v=\eta_t+u\eta_x & {\rm on} & y=\eta(x,t),
\end{array}
\end{equation}
and 
 \begin{equation}
 \label{kin_fs}
 v_1=\eta_{1,t}+u_1\eta_{1,x}\,\,{\rm on}\,\,y=\eta_1(x,t)+h_1.
\end{equation}
Throughout the paper we assume that the flow in both fluid layers is irrotational, that is, the vorticity is zero in each separate layer:
 \[
 \bm{\gamma}:=\bm{u}_y-\bm{v}_x=0 \quad \mbox{ in } \Omega \cup \Omega_1.
 \] 
 This enables the introduction of a velocity potential for each fluid layer $$\bm{\varphi}(x,y,t)=\left\{\begin{array}{ccc} \varphi(x,y,t), &{\rm in}& \Omega,\\ \varphi_1(x,y,t), &{\rm in} & \Omega_1,\end{array}\right.$$
by means of
\begin{equation}\label{vel_pot}
 \left\{\begin{array}{lll}
  u=\varphi_x ,&v=\varphi_y, & {\rm in}\quad \Omega\\
  u_1=\varphi_{1,x}, &v_1=\varphi_{1,y},& {\rm in}\quad \Omega_1.
 \end{array}\right.
\end{equation} From \eqref{masscons} and \eqref{vel_pot} it follows that the functions $\varphi$ and $\varphi_1$ are harmonic in their domains,
 \begin{align}
   & \Delta \varphi(x,y,t)=\varphi_{xx}+\varphi_{yy}=0 \, \, {\rm in} \,\, \Omega, \nonumber \\ 
   &\Delta \varphi_1(x,y,t)= \varphi_{1,xx}+\varphi_{1,yy}=0     \, \, {\rm in} \, \, \Omega_1. \label{Laplace}
 \end{align} 
 Due to the lack of underlying currents and vorticities, it can be shown that the potentials $\varphi(x,t),$ $\varphi_1(x,t)$ are also $L$-periodic in $x.$ 
The kinematic boundary conditions  \eqref{kin_int} and \eqref{kin_fs} can now be recast in terms of velocity potentials as 
\begin{equation}\label{kin_interface}
 \begin{array}{c}
   \eta_t=( \varphi_{1,y})_{s}-\eta_x (\varphi_{1,x})_{s},\\
 \eta_t=(\varphi_y)_{s}- \eta_x (\varphi_x)_{s}   ,
  \end{array}
\end{equation}
and 
\begin{equation}\label{kin_surface}
 \eta_{1,t}=(\varphi_{1,y})_{s_1}-\eta_{1,x} (\varphi_{1,x})_{s_1}\end{equation}
where the subscript $s_1$ denotes the trace at the free surface, while the subscript $s$ denotes the trace at the interface.
The following notation will be used later:
\begin{equation}\label{Phi_notation}
\begin{array}{l}
 \Phi(x,t)=\varphi(x,\eta(x,t),t),\\
 \Phi_1(x,t)=\varphi_1(x,\eta(x,t),t),\\
 \Phi_2(x,t)=\varphi_1(x,h_1+\eta_1(x,t),t).\end{array}
\end{equation}

\section{Linearisation  via Hamiltonian formulation}\label{sec:3}

The systematic linearisation of the governing equations necessitates a suitable nondimensionalisation and scaling of the variables. In the situation where the depths $h$ and $h_1$ are of the same order of magnitude, so we can introduce a depth scale $ \mathfrak {h}$ such that $h,h_1= \mathcal O(\mathfrak{h})$; similarly for the  amplitudes of the surface and internal wave, $a$ and $a_1$, respectively --- we introduce the amplitude length scale $ \mathfrak {a}$ and assume that  $a,a_1= \mathcal O(\mathfrak{a})$. The standard scaling parameters are defined
\begin{equation}\label{param}
\epsilon= \mathfrak {a}/ \mathfrak {h}, \quad  \delta= k\mathfrak {h}, 
\end{equation}
for wavenumber $k=2\pi/L$, where $\epsilon$  measures the amplitude size relative to the layer depth, while $\delta$ measures the depth relative to the wavelength. In the linear regime it is necessary for $\epsilon $ to be very small ($\ll1$). At this stage we do not specify the wavelengths and assume that $\delta = \mathcal{O}(1)$.
These parameters can be used to nondimensionalise all quantities (cf. \cite{Johnson})
\begin{align}
\bar{ h} &=  \mathfrak{h} h ,  \quad  \bar{ h}_1 =  \mathfrak{h} h_1 ,  \quad \bar{x}= \frac{\mathfrak{h}}{\delta} x, \quad \bar{y}= \frac{\mathfrak{h}}{\delta^2} y ,\quad \bar{t} = \frac{\mathfrak{h} }{\delta \sqrt{\bar{g}\mathfrak{h}}}  t, \quad \bar{\eta} = \epsilon \mathfrak{h} \eta,  \nonumber \\
\bar{\varphi}&= \frac{\epsilon}{\delta } \mathfrak{h } \sqrt{\bar{g}\mathfrak{h}} \varphi, \quad 
\bar{\eta}_1 = \epsilon \mathfrak{h} \eta_1, \quad  \bar{\varphi}_1= \frac{\epsilon}{\delta } \mathfrak{h } \sqrt{\bar{g} \mathfrak{h}} \varphi_1, \quad \bar{f} =  \frac{ \sqrt{\bar{g}\mathfrak{h}} }{\mathfrak{h}}   f.
\end{align}
Bars are used to denote the dimensional quantities, while the nondimensional ones are without a bar. The nondimensional version of the Earth's acceleration $\bar{g}=9.81$ m/s$^2$ is  $g=1$, but we will retain $g$ throughout the following in order to easily revert back to the dimensional form of the governing equations. It is also assumed that the nondimensional constant $f=\mathcal{O}(1)$, although we note that since $ \bar{f} = 1.5\times 10^{-4}$ rad/s, even for significant depths $f$ will be quite small: for instance, if $\mathfrak{h}=4\times 10^4$m, then $f=0.01$.

The equations \eqref{kin_surface}--\eqref{Phi_notation} become (for the non-dimensional variables)
\begin{align*}
 &\eta_{1,t}=(\varphi_{1,y})_{s_1}-  \epsilon \eta_{1,x}  (\varphi_{1,x})_{s_1}  
 \\
\nonumber    &\eta_t=( \varphi_{1,y})_{s}-  \epsilon \eta_x  (\varphi_{1,x})_{s}, \ \eta_t=(\varphi_y)_{s}- \epsilon \eta_x (\varphi_x)_{s},
\\
\nonumber &\Phi(x,t)=\varphi(x,\epsilon \eta(x,t),t), \ \Phi_1(x,t)=\varphi_1(x, \epsilon \eta(x,t),t),  \ \Phi_2(x,t)=\varphi_1(x,h_1+ \epsilon \eta_1(x,t),t).
 \end{align*} 
In the leading order, as $\epsilon \to 0$, the above equations are approximated  by
\begin{equation} \label{kin_surface1}
 \eta_{1,t}=(\varphi_{1,y})_{s_1}  ; 
 \end{equation}
\begin{equation}\label{kin_interface1}
   \eta_t=( \varphi_{1,y})_{s}, \quad 
 \eta_t=(\varphi_y)_{s},
\end{equation}
and
\begin{align}\nonumber     
 \Phi(x,t)=\varphi(x,0,t),\quad
 \Phi_1(x,t)=\varphi_1(x,0,t),\quad
 \Phi_2(x,t)=\varphi_1(x,h_1,t).\end{align}
The governing equations can be represented in a Hamiltonian form along the lines of \cite{CI-19,CIM}, which takes into account both the Coriolis force, and constant vorticities in each layer. Here we are include the Coriolis force (without vorticity), and present the salient details for the sake of completeness.  The Hamiltonian variables are $\eta, \eta_1$ and
\begin{equation} \label{HV1}
\begin{split}
&\xi=\rho \varphi(x,0,t) -\rho_1 \varphi_1(x,0,t)\equiv \rho_1\big((r+1)\Phi(x,t)-\Phi_1(x,t) \big) , \\
& \xi_1=\rho_1\varphi_1(x,h_1,t)\equiv\rho_1 \Phi_2(x,t);
\end{split}
\end{equation} where the non-dimensional quantity $r$ is defined via $\rho = \rho_1(1+r).$ The governing equations have the form   \cite{CI-19}
\begin{equation} \nonumber
\begin{split}
\xi_t=&-\frac{\delta H}{\delta \eta}-\Gamma\int_{-\infty}^{x}\frac{\delta H}{\delta \xi(x')}dx' \\
\eta_t=&\frac{\delta H}{\delta \xi}\\
\xi_{1,t}=&-\frac{\delta H}{\delta \eta_1}-\Gamma_1\int_{-\infty}^{x}\frac{\delta H}{\delta \xi_1(x')}dx' \\
\eta_{1,t}=&\frac{\delta H}{\delta \xi_1}
\end{split}
\end{equation}
where $\Gamma= (\rho -\rho_1)f,$ $  \Gamma_1=\rho_1 f$ are constants and $H[\xi,\xi_1, \eta, \eta_1]$ is the Hamiltonian functional. The Hamiltonian is given by the energy of the system for one period (horizontal length $L$), and has the same integral density as in  \cite{CIM,CI-19}, given by the expression 
\begin{equation}
\begin{split}
H[\xi,\xi_1, \eta, \eta_1]=&\frac{1}{2}\int_{0}^L\Big[\xi \frac{D\tanh(hD)\coth(h_1D)}{\rho \coth(h_1D)+\rho_1 \tanh(hD)}\xi +2\xi \frac{D\tanh(hD)\text{csch}(h_1D)}{\rho \coth(h_1D)+\rho_1 \tanh(hD)}\xi_1\\
+&\xi_1 \frac{D\big(\tanh(hD)\coth(h_1D)+\frac{\rho}{\rho_1}\big)}{\rho \coth(h_1D)+\rho_1 \tanh(hD)}\xi_1 +g(\rho-\rho_1)\eta^2+\rho_1 g\eta_1^2  \Big]dx .
\end{split}
\end{equation}
The explicit form of the equations is therefore 
\begin{equation} \label{linsys11}
\begin{split}
\xi_t=&-g(\rho-\rho_1)\eta-\Gamma \partial^{-1}\eta_t, \\
\eta_t=& \frac{D\tanh(hD)\coth(h_1D)}{\rho \coth(h_1D)+\rho_1 \tanh(hD)}\xi + \frac{D\tanh(hD)\text{csch}(h_1D)}{\rho \coth(h_1D)+\rho_1 \tanh(hD)}\xi_1,\\
\xi_{1,t}=&-\rho_1g\eta_1-\Gamma_1 \partial^{-1}\eta_{1,t}, \\
\eta_{1,t}=&\frac{D\tanh(hD)\text{csch}(h_1D)}{\rho \coth(h_1D)+\rho_1 \tanh(hD)}\xi+ \frac{D\big(\tanh(hD)\coth(h_1D)+\frac{\rho}{\rho_1}\big)}{\rho \coth(h_1D)+\rho_1 \tanh(hD)}\xi_1.
\end{split}
\end{equation}
These equations will be used to solve the boundary value problems for the velocity potentials in the two layers, and also to obtain the dispersion relation for our system.
To explicitly determine the velocity potentials in each fluid layer from  their values on the surface and on the interface corresponds to analytic continuation procedure, using the fact that the potentials satisfy the Laplace equation \eqref{Laplace} inside their fluid domains. The boundary values will be obtained from \eqref{kin_surface1} and \eqref{kin_interface1} by adopting the standard linear Ansatz for the functions $\eta(x,t)$ and $\eta_1(x,t)$. The corresponding boundary value problem for the lower layer therefore is
\begin{equation} \label{LL1}
\begin{split}
&\Delta \varphi(x,y) =0, \\
&\frac{\partial \varphi}{\partial \mathbf{n}} =0 \,\, \text{on} \,\, y=-h,\\
& \varphi_y(x,0,t)=\eta_t(x,t).
 \end{split}
\end{equation}
We consider the periodic problem (as opposed to \cite{CI-19,CIM}, which are focused on the real line) and so the operator $D=-i\partial_x$ now has only discrete spectrum, $\tilde{k}=\tilde{k}_n=\frac{2\pi n}{L}$ for an integer $n,$ and periodic eigenfunctions, $e^{i\tilde{k}x},$ since
\[  D e^{i \tilde{k}x} = \tilde{k} e^{i\tilde{k}x}. \]
For convenience,  summation over the integers $n$ (from $-\infty$ to $\infty$) will be written simply as a summation over $\tilde{k},$ remembering the correspondence between the two quantities.
The $t$-dependence is not written explicitly, we can treat $t$ here as a parameter, while the problem is stated in the $(x,y)$ domain. 
The general solution in the bulk is a superposition of harmonic functions $\varphi_{\tilde{k}}(x,y),$  periodic in $x,$ satisfying \eqref{LL1}
\begin{equation} \nonumber 
    \varphi(x,y)=\sum_{\tilde{k}} \mathcal{A}(\tilde{k}) \varphi_{\tilde{k}}(x,y) , \quad \varphi_{\tilde{k}}(x,y)=e^{i \tilde{k}x}\cosh [\tilde{k}(y+h)].
\end{equation}
The yet unknown function $\mathcal{A}(\tilde{k})$ could be obtained from the interface condition
\begin{equation} \nonumber 
\eta_t(x)=\varphi_y(x,0)=\sum_{\tilde{k}}  \tilde{k}\mathcal{A}(\tilde{k}) e^{i\tilde{k}x}\sinh (\tilde{k} h) .
\end{equation}
The Fourier coefficient of the above expansion is given by
\begin{equation} \nonumber 
\begin{split}
&\mathcal{A}(\tilde{k})=\frac{1}{L \tilde{k}\sinh (\tilde{k} h) }\int_0^L e^{-i\tilde{k}x'}\eta_t(x')dx',\\
&\varphi(x,y)=\frac{1}{L}\sum_{\tilde{k}}\int_0 ^L e^{i\tilde{k}(x-x')} \eta_t(x') \frac{\cosh[\tilde{k}(y+h)]}{\tilde{k}\sinh(\tilde{k}h)} dx'.
\end{split}
\end{equation}
The periodic delta-function 
$$\delta(x-x')=\frac{1}{L}\sum_{\tilde{k}} e^{i\tilde{k}(x-x')}  $$ 
is normalised by the condition $\int_0^L \delta(x) dx=1$ and we have
\begin{equation} \nonumber
\begin{split}
&\varphi(x,y)= \frac{\cosh[D(y+h)]}{D\sinh(Dh)} \int_0 ^L \frac{1}{L} \sum_{\tilde{k}} e^{i\tilde{k}(x-x')} \eta_t(x') dx' = \frac{\cosh[D(y+h)]}{D\sinh(Dh)} \eta_t(x).
\end{split}
\end{equation}
For the particular choice $\eta = a\cos(kx-\omega t)$, where $k= 2\pi /L$ is the wavenumber, we note that \[D^2 \sin(kx -\omega t) = k^2 \sin(k x - \omega t)\] and similarly for cosine. Thus, for even powers, the action of $D$ can formally be replaced by $k$, that is
\begin{equation}\label{phisol}
    \varphi(x,y,t)= a c(k) \frac{\cosh[k(y+h)]}{\sinh(kh)} \sin(kx-\omega t),
\end{equation}
and also 
\begin{equation} \label{Phi}
    \Phi(x,t):=\varphi(x,0,t)=ac \coth(kh) \sin(kx-\omega t).
\end{equation}
The dispersion relation $c(k)=\omega(k)/k$ (giving the possible propagation speeds) will be determined in Section \ref{SecDisp}.
The boundary-value problem for the upper layer is
\begin{equation} \nonumber 
\begin{split}
&\Delta \varphi_1(x,y) =0, \\
&\varphi_{1,y} (x, h_1)=\eta_{1,t}(x) \,\, \text{on} \,\, y=h_1,\\
& \varphi_{1,y}(x,0)=\eta_t(x).
 \end{split}
\end{equation}
The general solution in the bulk is a superposition of harmonic functions of the form $e^{\pm \tilde{k}y+i\tilde{k}x},$ periodic in $x,$ that is
\begin{equation} \nonumber 
\varphi_1(x,y)=\sum_{\tilde{k}} \left(\mathcal{A}(\tilde{k})e^{\tilde{k}y+i\tilde{k}x}+\mathcal{B}(\tilde{k})e^{-\tilde{k}y+i\tilde{k}x} \right) 
\end{equation}
The yet unknown Fourier coefficients $\mathcal{A}(\tilde{k})$ and $\mathcal{B}(\tilde{k})$ can be obtained from the surface and interface conditions.
\begin{equation} \nonumber 
\begin{split}
&\eta_{1,t}(x)=\varphi_{1,y}(x,h_1)=\sum_{\tilde{k}} e^{i\tilde{k}x} \tilde{k} \left( \mathcal{A}(\tilde{k})e^{\tilde{k}h_1} -\mathcal{B}(\tilde{k})e^{-\tilde{k}h_1} \right),\\
&\eta_t(x)=\varphi_{1,y}(x,0)=\sum_{\tilde{k}} e^{i\tilde{k}x}  \tilde{k} \left( \mathcal{A}(\tilde{k}) -\mathcal{B}(\tilde{k})\right) ,\\
\end{split}
\end{equation}
Evaluating the coefficients of the Fourier series given above allow us to determine $\mathcal{A}(\tilde{k})$ and $\mathcal{B}(\tilde{k}):$
\begin{equation} \nonumber
\mathcal{A}(\tilde{k})= \frac{1}{L}\int_{0}^L \!\!\frac{\eta_{1,t}(x')-e^{-\tilde{k}h_1}\eta_t(x')}{2\tilde{k}\sinh(\tilde{k}h_1)}e^{-i\tilde{k}x'} dx',\quad 
 \mathcal{B}(\tilde{k})= \frac{1}{L}\int_0^L \!\! \frac{\eta_{1,t}(x')-e^{\tilde{k}h_1}\eta_t(x')}{2\tilde{k}\sinh(\tilde{k}h_1)} e^{-i\tilde{k}x'}dx',
\end{equation}
and proceeding like in the solution of the previous problem we obtain finally
\begin{equation} \nonumber 
\varphi_1(x,y)=\frac{\cosh(yD)}{D\sinh (h_1D)}\eta_{1,t}(x)-\frac{\cosh((y-h_1)D)}{D\sinh( h_1D)}\eta_t(x).
\end{equation}
In particular, assuming the standard Ansatz  for linear water waves
\begin{equation}\label{eta-ans}
\eta=a \cos(kx-\omega t) \quad \mbox{ and } \quad \eta_1=a _1 \cos(kx-\omega t),
\end{equation}
gives
\begin{equation} \label{UL51}
\varphi_1(x,y,t)=\frac{c}{\sinh (k h_1)}[ a_1 \cosh(ky)- a\cosh(k(y-h_1))]\sin(kx-\omega t).
\end{equation}
On the boundaries of the upper domain we have the following 
\begin{equation}\label{Phi1}
\begin{split}
    &\Phi_1(x,t):= \varphi_1(x,0,t)=\frac{c}{\sinh (k h_1)}[ a_1 - a\cosh(kh_1)]\sin(kx-\omega t),\\
    &\Phi_2(x,t):= \varphi_1(x,h_1,t)=\frac{c}{\sinh (k h_1)}[ a_1 \cosh(kh_1)- a]\sin(kx-\omega t).
\end{split}
    \end{equation}
From the linear water wave Ansatz \eqref{eta-ans}, and the first and third equations of \eqref{linsys11}, we obtain 
\begin{equation} \label{HV3}
\begin{split}
& \xi =\frac{a}{\omega}\left( g(\rho-\rho_1)-\frac{\Gamma \omega}{k}  \right) \sin(kx-\omega t)=\frac{a\rho_1}{c k} r \left( g- f c  \right) \sin(kx-\omega t),\\
& \xi_1 =\frac{a_1}{\omega}\left( g\rho_1-\frac{\Gamma_1 \omega}{k}  \right) \sin(kx-\omega t)=\frac{a_1\rho_1}{c k}\left( g- f c  \right) \sin(kx-\omega t),
\end{split}
\end{equation}
where we recall that the Hamiltonian variables are as in \eqref{HV1}.
Introducing the new variable \begin{equation} \label{A}
    A:=\frac{g-f c }{kc^2},
\end{equation}
the substitution of \eqref{HV3}, \eqref{Phi1} and \eqref{Phi} in \eqref{HV1} gives
\begin{equation} \label{RR}
\begin{split}
& [(r+1) \coth(kh)+\coth(kh_1)-rA]a =\frac{a_1}{\sinh (kh_1)} ,\\
& [\coth(kh_1) - A]a_1 =\frac{a}{\ \sinh(kh_1)}.
\end{split}
\end{equation}
The second equation  leads to the expressions for the amplitude ratio $a/a_1$ (and $A$):
\begin{equation}\label{A-a}
    \frac{a}{a_1}=\cosh(kh_1)-A\sinh(kh_1) \Longleftrightarrow
A=\left(1-\frac{a}{a_1}{\sech(kh_1)}\right)\coth(kh_1).
\end{equation}
Taking into account \eqref{A-a} we can write \eqref{UL51} as 
\begin{equation} \label{phi1-A}
\varphi_1(x,y,t)=a_1 c[ \sinh(k(y-h_1))+A\cosh(k(y-h_1))]\sin(kx-\omega t).
\end{equation}
Thus, we have obtained the potentials in \eqref{phisol} and \eqref{phi1-A}. The dispersion relation expression $c=c(k)$ will be derived in the next section. 

\section{Dispersion relations} \label{SecDisp}
\subsection{Dispersion relation for amplitude ratio parameters \texorpdfstring{$A$}{A} }
Ensuring compatibility between both equations in \eqref{RR} leads to the quadratic equation
\begin{equation}\label{eqA}
    rA^2-(r+1)[\coth(kh)+\coth(kh_1)]A+(r+1)\coth(kh)\coth(kh_1)+1=0.
\end{equation}
This equation represents a dispersion relation that must be satisfied by $A$ (as defined by \eqref{A-a}), which in turn prescribes the ratio of the amplitudes of the wave solutions given by \eqref{eta-ans}. The dispersion relation \eqref{eqA} has the two real, and positive, roots
\begin{equation}
\label{solutionA}
    A_{\pm}=\frac{2(r + 1)\sinh(k(h + h_1)) \pm \sqrt{\Delta}}{4 r\sinh(k h)\sinh(k h_1)}, 
    \end{equation}
\begin{align*}
    \Delta:=& 8 r\cosh(k(h + h_1))\cosh(k(h - h_1)) + 2r^2 \cosh(2k(h - h_1)) + 2\cosh(2k(h + h_1))\\
    & \qquad \qquad \qquad \qquad  \qquad \qquad \qquad \qquad \qquad \qquad \qquad \qquad \qquad\quad -( 2r^2 + 8r + 2).
\end{align*} 
Denoting  \eqref{eqA} as $\mathcal P(A)=0$, it follows by direct calculation that
\[
\mathcal P\left(\frac{1}{\tanh kh_1}\right)=1-\frac{1}{\tanh^2 kh_1}<0, \qquad    \mathcal P\left(\frac{1}{\tanh kh}\right)=1-\frac{1}{\tanh^2 kh_1}<0,
\]
giving the relations (which we note are independent of the relative densities):
\begin{equation}\label{tauineq}
A_-<\frac{1}{\tanh kh_1} <A_+, \qquad A_-<\frac{1}{\tanh kh} <A_+.
\end{equation}
It follows from \eqref{A} that $A=A_+$ corresponds to the slower propagating waves, while the solution $A=A_-$ corresponds to the faster propagating waves. Furthermore,   \eqref{A-a} and \eqref{tauineq} imply that  ${a}/{a_1}  >0 $ (waves are in-phase) for the faster propagating waves $A=A_-$, while ${a}/{a_1}  <0 $ (waves are out-of-phase) for the slower propagating waves $A=A_+$.

It is clear from \eqref{tauineq} that $A_+>1$ always, and we infer from \eqref{A} that $A_->0$ for all realistic wavespeeds (for typical parameter values of equatorial wave motion the ratio $fc/g$   is  less that $1\%$). However, since $A$ is not defined in isolation by \eqref{A} but must also satisfy the compatibility condition \eqref{eqA}, it turns out that a stronger condition holds.
\begin{theorem}\label{THM}
The roots $A_{\pm}$ of \eqref{eqA} are such that $A_{\pm}\geq 1$, with equality holding only for the root $A_{-}(k,h,h_1)$ in the limiting case whereby either $kh$, or $kh_1$, tends to infinity.
\end{theorem}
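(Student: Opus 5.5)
Write $\alpha=\coth(kh)>1$ and $\beta=\coth(kh_1)>1$. Then \eqref{eqA} reads
\[
\mathcal P(A)=rA^2-(r+1)(\alpha+\beta)A+(r+1)\alpha\beta+1 .
\]
Since $r>0$, $\mathcal P$ is an upward parabola, and its two roots $A_\pm$ are real by \eqref{tauineq}. So it suffices to prove two facts: $\mathcal P(1)\ge 0$, and the vertex lies to the right of $1$. Together these force both roots to satisfy $A_\pm\ge 1$.

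The vertex is at
\[
\frac{(r+1)(\alpha+\beta)}{2r}>\frac{\alpha+\beta}{2}>1 ,
\]
so it indeed lies to the right of $1$. (Independently, \eqref{tauineq} already gives $A_+>\max(\alpha,\beta)>1$, so only $A_-$ needs attention.)

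Now evaluate the parabola at $1$:
\[
\mathcal P(1)=r-(r+1)(\alpha+\beta)+(r+1)\alpha\beta+1=(r+1)\bigl[\alpha\beta-\alpha-\beta+1\bigr]=(r+1)(\alpha-1)(\beta-1).
\]
This is strictly positive because $\alpha,\beta>1$ for finite $kh$ and $kh_1$. Hence $1$ lies strictly outside the root interval $[A_-,A_+]$, and on the left of it, since $1$ is smaller than the vertex. Therefore $A_->1$ strictly for all finite parameters.

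For the equality statement, let $kh\to\infty$ (or $kh_1\to\infty$). Then $\alpha\to1$ (or $\beta\to1$) and $\mathcal P(1)\to0$. The coefficients of $\mathcal P$ converge, so the roots converge continuously to the roots of the limiting polynomial, which has $1$ as a root. That root is the smaller one, since the other root is the limit of $A_+>\max(\alpha,\beta)$. One can see this directly by factoring the limiting polynomial: for $\alpha=1$ it equals
\[
rA^2-(r+1)(1+\beta)A+(r+1)\beta+1=(A-1)\bigl(rA-(r+1)\beta-1\bigr),
\]
whose other root is $\bigl((r+1)\beta+1\bigr)/r>1$. Hence $A_-\to1$ while $A_+$ stays bounded away from $1$. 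Equality is therefore attained only by $A_-$, and only in this limiting sense.

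The main obstacle is noticing the factorisation $\mathcal P(1)=(r+1)(\alpha-1)(\beta-1)$. Working with the explicit formula \eqref{solutionA} and the discriminant $\Delta$ would be far messier, so I would avoid it.
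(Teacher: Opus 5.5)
Your proof is correct, and it takes a genuinely different and much shorter route than the paper.

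The paper fixes $h,h_1$ and treats $A_\pm$ as functions of $k$. It computes the limits $A_\pm\to\infty$ as $k\to0^+$, and $A_+\to1+2/r$, $A_-\to1$ as $k\to\infty$. It then rules out interior critical points: assuming $A'(k^*)=0$ and eliminating $A$ between \eqref{eqAshort} and its $k$-derivative gives a condition $F(k^*)=0$. A fairly heavy computation (the split $F_1=F_{10}+rhh_1F_{11}$, an AM--GM bound for $F_{10}$, a derivative argument for $F_{11}$) then shows $F>0$, so $A_\pm$ decrease strictly from $\infty$ to their limits.

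You instead evaluate the quadratic at $A=1$ and use the factorisation $\mathcal P(1)=(r+1)(\coth kh-1)(\coth kh_1-1)>0$. Together with the vertex location, this puts $1$ strictly to the left of both roots, pointwise in $(k,h,h_1)$ and without any differentiation. You could even drop the vertex step: $\mathcal P(1)>0>\mathcal P(\coth kh_1)=1-\coth^2 kh_1$ places a root in $(1,\coth kh_1)$, which by \eqref{tauineq} must be $A_-$, giving $1<A_-<\min\{\coth kh,\coth kh_1\}$.

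Your factorisation of the limiting polynomial also handles the two limits $kh\to\infty$ and $kh_1\to\infty$ separately. For example, $A_+\to((r+1)\coth kh_1+1)/r$ when $kh\to\infty$, which recovers the paper's $1+2/r$ when both tend to infinity. The paper's argument only directly addresses the joint limit $k\to\infty$ with fixed depths.

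In exchange, the paper's route gives something the theorem does not need: strict monotonicity of $A_\pm$ in $k$ for fixed $h,h_1$.
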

\begin{proof} 
For convenience, and without loss of generality, in this proof we assume that $h,h_1$ are fixed, and finite, and examine the limiting behaviour of $A_{\pm}$ with respect to $k$. Note that the limit of either $h$, or $h_1$, tending to infinity is invoked in a physical regime whereby either the lower, or upper, fluid layer is significantly larger than the other, respectively.
 
We observe from \eqref{solutionA} that the roots $A_{\pm}(k)$ of \eqref{eqA} are odd functions with respect to $k$, and $\lim_{k \to +0}A_{\pm}(k) =\infty.$ As $k \to \infty$  it follows directly from \eqref{eqA} that 
\begin{equation} \label{limA}
    \lim_{k \to \infty}A_+(k)=1+\frac{2}{r}>1, \quad \lim_{k \to \infty}A_-(k)=1.
\end{equation} 
By establishing that $A'_{\pm}(k) \ne 0$ for positive $k$, it will follows that the roots $A_{\pm}(k)$ decay monotonically to the limits in \eqref{limA}. We use the notation $A(k)$ to denote either $A_+(k)$ or $A_{-}(k)$. Let us assume the opposite, namely that $A'(k^*) = 0$ for some $k^*>0$.  Expressing \eqref{eqA} in the form 
\begin{equation}
      rA^2+\beta A+\gamma=0, \label{eqAshort}
     \end{equation}
where  $\beta(k)=-(r+1)[\coth(kh)+\coth(kh_1)]$, and $\gamma(k)=(r+1)\coth(kh)\coth(kh_1)+1$, then differentiating \eqref{eqAshort} with respect to $k$, and using $A'(k^*) = 0$, gives
\begin{equation} \label{Anew}
    A(k^*)= - \frac{\gamma'(k^*)}{\beta'(k^*)}.
\end{equation}
Equations \eqref{eqAshort} and \eqref{Anew} lead to the following relation which holds at the critical point:
\begin{equation} \nonumber
   F(k^*)\equiv  r [\gamma'(k^*)]^2 - \beta (k^*) \beta' (k^*)\gamma'(k^*) + \gamma (k^*) [\beta'(k^*)]^2=0,
\end{equation}
and direct computation shows that 
\begin{equation} \nonumber
    F(k)=F_1(k) (r + 1)^2 \text{csch}^4(k h_1) \text{csch}^4(k h),
\end{equation}
where
\begin{align} \nonumber
    F_1(k)&=F_{10}(k)+ r h h_1 F_{11}(k), \\ \nonumber
    F_{10}(k)& =h h_1 [\cosh^2(k h_1) + \cosh^2(k h)] + h^2 \sinh^2(kh_1)  + h_1 ^2\sinh^2(k h)  - 2 hh_1, \\  \nonumber
    F_{11}(k)& =2 \cosh(k h_1) \cosh(k h) \cosh(k(h - h_1)) - [\cosh^2(k h_1) +\cosh^2(k h)].
\end{align}
   From the Cauchy-Schwarz inequality
   \begin{align}
       F_{10}(k)& \ge 2 h h_1 \cosh(k h_1)  \cosh (k h) + 2 h h_1 \sinh(kh_1)  \sinh(k h)  - 2 hh_1 \nonumber \\
       &= 2 h h_1 [\cosh(k ( h+h_1) ) -1] > 0.     \nonumber
   \end{align}
Let us now analyse $F_{11}(k).$ We observe that $F_{11}(k)=0$ in the particular case $h=h_1.$ In this case nevertheless $F_{10}(k)>0$ and hence $F(k)>0.$ 
If $h_1 \ne h$, then  $F_{11}(0)=0$ and $$F'_{11}(k)=(h-h_1) \sinh(2k(h-h_1))>0$$ for $k>0.$
Therefore $F_{11}(k)>0$ for $k>0$ and again $F(k)>0.$ This contradiction shows that the equation $F(k^*)=0$ does not have any solutions for  $k^*>0$, and hence $A(k)$ has no turning points: $A(k)$ must decrease monotonically from $\infty$ to the limiting values \eqref{limA}.
\end{proof}
\begin{corollary}
 It follows by Theorem \ref{THM} and  \eqref{A} that wavespeeds $c$ are bounded by
\[
c^2+\frac{f}{k}c<\frac{g}{k}.
\]
In the absence of Coriolis forces ($f=0$), this inequality reduces to $c^2<g/k$.
\end{corollary}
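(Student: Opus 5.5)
The corollary follows by unwinding the definition \eqref{A} of $A$ and inserting the lower bound from Theorem \ref{THM}. The one point needing care is strictness: the theorem gives $A_\pm\geq 1$, but the corollary asserts a strict inequality.

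First I will fix the physical setting: $k=2\pi/L>0$, and $h,h_1$ are finite and positive. The wavespeed $c$ is nonzero, since $A$ in \eqref{A} involves division by $kc^2$; indeed \eqref{HV3} already requires $\omega\neq 0$.

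Next I will argue that $A>1$ strictly for both roots. Theorem \ref{THM} states that equality $A=1$ occurs only for $A_-$, and only in the limit $kh\to\infty$ or $kh_1\to\infty$. For finite $k,h,h_1$ this strictness is justified as follows.
\begin{itemize}
\item For $A_+$: the bound $A_+>\coth(kh_1)>1$ from \eqref{tauineq} gives $A_+>1$ directly.
\item For $A_-$: the proof of Theorem \ref{THM} shows that $A_-(k)$ is strictly decreasing in $k$, with limit $1$ as $k\to\infty$ from \eqref{limA}. A strictly decreasing function that tends to $1$ satisfies $A_-(k)>1$ at every finite $k$.
\end{itemize}

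Then, since $kc^2>0$, I multiply $A=(g-fc)/(kc^2)>1$ through by $kc^2$ to get $g-fc>kc^2$. Dividing by $k>0$ and rearranging gives
\[
c^2+\frac{f}{k}c<\frac{g}{k}.
\]
Setting $f=0$ yields $c^2<g/k$.

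The only potential obstacle is the strictness of the inequality, and the monotonicity argument from the theorem's proof settles it. Everything else is algebra using the positivity of $k$ and $c^2$.
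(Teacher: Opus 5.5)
Your proposal is correct and follows the paper's route: the corollary comes from writing $A=(g-fc)/(kc^2)$ via \eqref{A}, applying the lower bound of Theorem~\ref{THM}, and multiplying through by $kc^2>0$. Your justification of strictness, using \eqref{tauineq} for $A_+$ and the strict decrease of $A_-$ to its limit $1$ from the theorem's proof, fills in a step the paper leaves implicit, and it is sound.
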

\subsection{Dispersion relation for wavespeeds \texorpdfstring{$c$}{c} }
Combining \eqref{A} with \eqref{eqA} leads to a corresponding dispersion relation for the wavespeeds $c(k)$ which has the form of a $4^{th}$ order polynomial.
In the absence of Coriolis forces ($f=0$) this quartic reduces to the well known dispersion relation (cf. \cite{Suth})
\begin{equation}     \nonumber  
c^4[(r+1) \coth(hk) \coth(h_1k)+1]-(r+1)\frac{g}{k}[\coth(hk) +\coth(h_1k)]c^2+r\frac{g^2}{k^2}=0.
\end{equation}
Alternatively, given the roots $A_{\pm}$ in \eqref{solutionA}, we can solve the quadratic equation \eqref{A} for $c$  to explicitly obtain all 4 possible linear wave mode solutions. Defining  the (even) function $B(k):=kA(k)$, we re-express  \eqref{A} in the form of a quadratic equation for $c$ with coefficients that are even functions of $k:$ 
\begin{equation}
     B(k) c^2 +fc - g = 0.
\end{equation}
The solutions to this equation 
\begin{equation}\label{ck-expl}
     c(k)=\frac{-f\pm \sqrt{f^2+4gB(k)}}{2B(k)}
\end{equation}
are real, since the discriminant $f^2+4gB(k)>0$ due to  $B(k)\geq 0.$ One can show that $B(k)\geq 0$ as follows. Since  $\sinh(k h)\sinh(k h_1)\geq 0,$ one needs to check the inequality $4(r+1)^2 \sinh^2 (k(h + h_1))  \geq \Delta(k^2),$ or
\begin{align}
        4(r+1)^2 [\cosh^2 (k(h + h_1))-1] &\geq 8 r[\cosh(k(h + h_1))\cosh(k(h - h_1))-1] \nonumber \\
        &+ 4r^2 [\cosh^2(k(h - h_1)) -1] + 4[\cosh^2(k(h + h_1))-1]  .   \nonumber
\end{align}
This can be further transformed to the form
\begin{align}
        (r^2+2r)[\cosh^2 (k(h + h_1))-1] &\geq 2 r[\cosh(k(h + h_1))\cosh(k(h - h_1))-1] \nonumber \\
        &+ r^2 [\cosh^2(k(h - h_1)) -1]   \nonumber
\end{align}
in which the inequality is evident since $\cosh(\mathcal{H}_1k) \geq \cosh(\mathcal{H}_2k) $ when $|\mathcal{H}_1|>|\mathcal{H}_2| .$
Therefore, all 4 solutions of \eqref{ck-expl} are real and even functions of $k.$
Moreover, there are always two positive solutions, representing right-running waves (in our situation, eastward travelling waves) $c_+^{(s)}(k), c_+^{(i)}(k)$ and two negative solutions for the left-running (westward) waves, $c_-^{(s)}(k), c_-^{(i)}(k)$. The  superscripts $s$ and $i$ denote the ``surface'' and ``internal'' propagation modes (also known as barotropic, and baroclinic, modes, respectively) and are related to the two possible solutions for $B(k)$.
In general, the surface modes have much larger speeds than the internal modes, $|c_{\pm}^{(s)}(k)|\gg | c_{\pm}^{(i)}(k)|.$

The short-wave asymptotics (the limit $k\to \infty$) are as follows:
\begin{equation}  \nonumber
  c_{\pm}^{(s)}(k)\simeq \pm \sqrt{\frac{g}{|k|}},\quad c_{\pm}^{(i)}(k)\simeq \pm \sqrt{\frac{rg}{(r+2)|k|}}.
\end{equation}
It is evident that the short wave limit does not depend on the Coriolis forces.
The long-wave limits ($k\to 0$) are constant values, which do depend on the Coriolis parameter $f:$
\begin{align}  \nonumber
     c_{\pm}^{(i)}&= \frac{-fr h h_1\pm \sqrt{f^2r^2 h^2 h_1^2+2 r g h h_1 [(r+1)(h+h_1)+\sqrt{\Delta_0}]}}{(r+1)(h+h_1)+\sqrt{\Delta_0}} ,\\  \nonumber
      c_{\pm}^{(s)}&=\frac{-fr h h_1\pm \sqrt{f^2r^2 h^2 h_1^2+2 r g h h_1 [(r+1)(h+h_1)-\sqrt{\Delta_0}]}}{(r+1)(h+h_1)-\sqrt{\Delta_0}} , \\
      \Delta_0:&=(r+1)^2(h^2+ h_1^2)+2(1-r^2)h h_1  .  \nonumber
\end{align}
For specific typical realistic values of the physical parameters, for example, $r = 0.005,$ 
$g = 9.81$ m/s$^2$, $h = 4000.0$ m, $ h_1 = 400.0$ m,  $f=2\omega_E=1.5\cdot10^{-4}$ rad/s, we have the following values (in m/s),
$c_{+}^{(s)}=    207.39$, $c_{+}^{(i)}=  4.21352$, $c_{-}^{(s)}= -208.05$, $c_{-}^{(i)}=-4.21379$.
The magnitude differences are
\begin{equation} \nonumber
    | c_{+}^{(s)}|-| c_{-}^{(s)}   |= -0.66  \text{ m/s}, \quad   | c_{+}^{(i)}|-| c_{-}^{(i)}   |= -2.7\times 10^{-4}\text{ m/s}.
\end{equation}
The difference for the internal modes is essentially unmeasurable, however for the surface modes the difference is of the magnitude of the average speed of the Equatorial Undercurrent (EUC), which is of the order of 1 m/s. In both cases, the westward component is larger. 
The complete solution of the 4 speeds as a function of $k$ is illustrated in Fig. \ref{fig:dispersion} where the parameter values are chosen with a larger $f$ to illustrate the features of the dispersion: $r = 0.01,$ $f = 1.0\times 10^{-2}$ rad/s,  $g = 9.81$ m/s$^2,$ $h = 500.0$ m   $h_1 := 100.0$ m. It is evident that $| c_{-}^{(s)}(k)|>| c_{+}^{(s)} (k)  |.$ 
In the first panel the results are in logarithmic scale for $k,$ in the second $k$ is in the usual scale and the behaviour near the origin is highlighted. 

We observe that the magnitude of the wave-speed of the surface waves drops significantly for $k>0.5,$ while the corresponding  magnitude  for the internal waves drops significantly for $k>0.05$.  This reflects the fact that the internal waves with large enough  magnitude and energy are observed for large wavelengths --- from hundreds of metres to several kilometres.
\begin{figure}[H]
	\begin{center}
		($a$)\ \fbox{\includegraphics[totalheight=0.25\textheight]{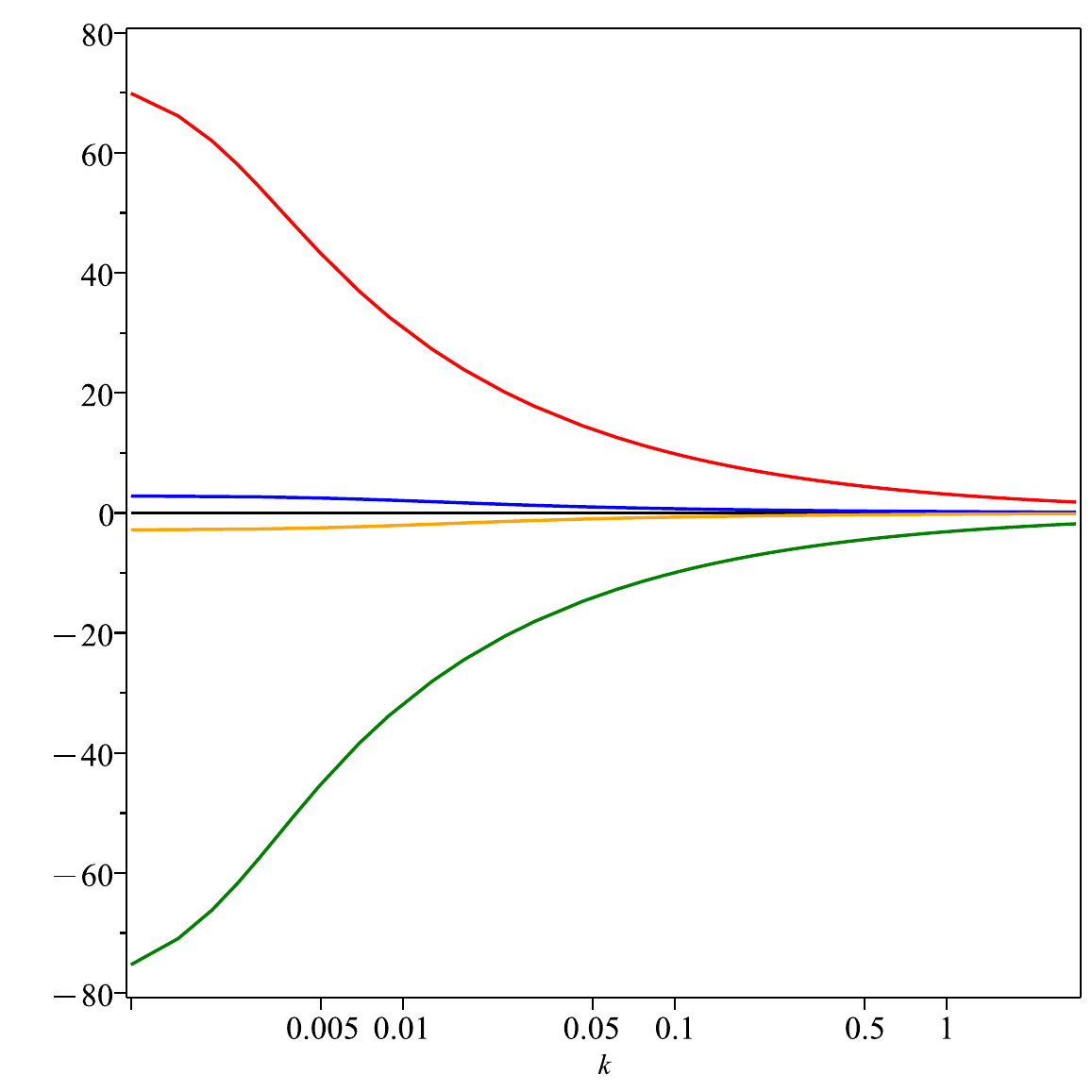}}\qquad 
		($b$)\ \fbox{\includegraphics[totalheight=0.25\textheight]{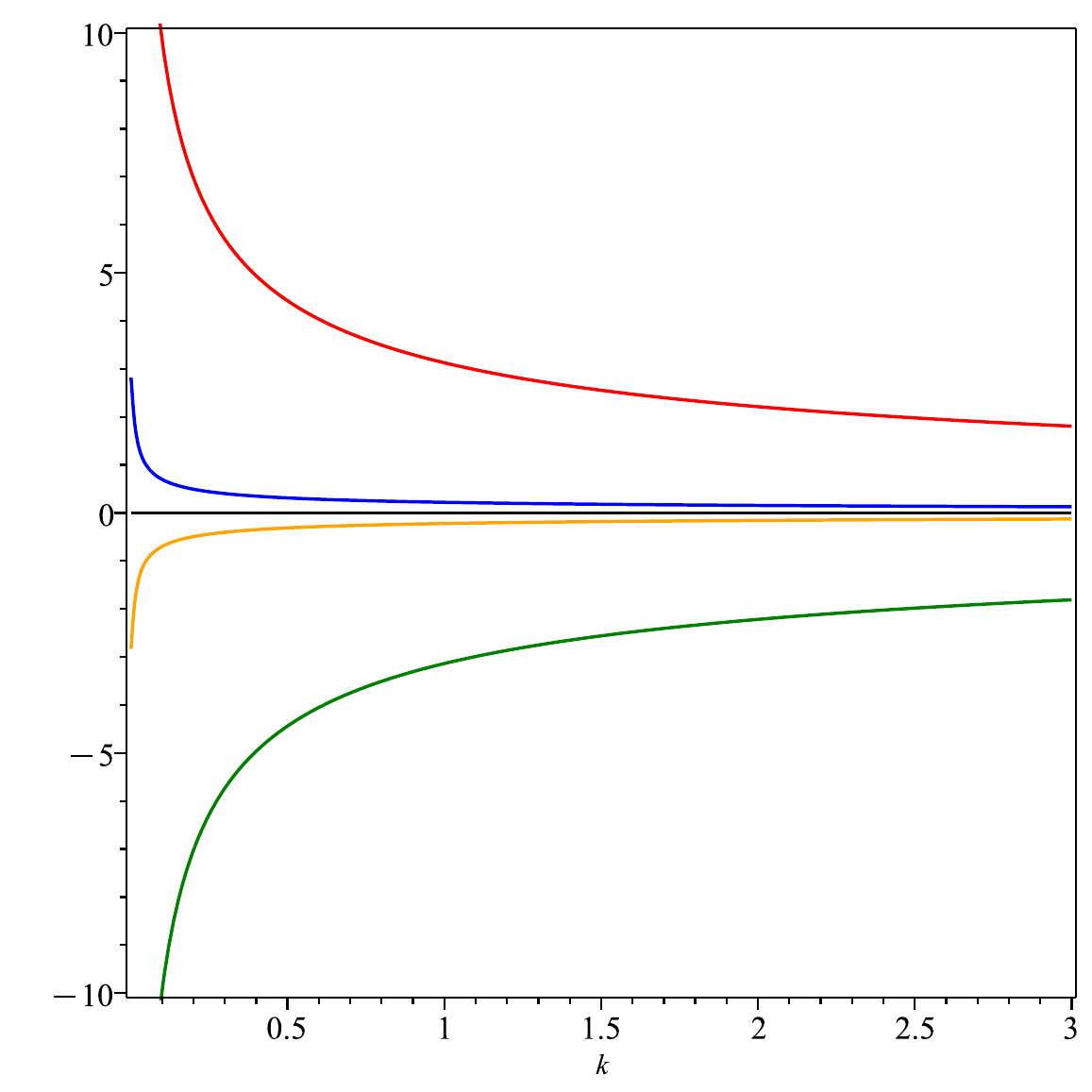}}
				\caption{The dispersion relations  $c_{+}^{(s)}(k)$ (red) $c_{-}^{(s)}(k)$ (green) 
                 $c_{+}^{(i)}(k)$ (blue) $c_{-}^{(i)}(k)$ (yellow) for the following parameter choice:
                $r = 0.01$, $f = 1.0\times 10^{-2}$ rad/s,  $g = 9.81$ m/s$^2,$ $h = 500.0$ m   $h_1 := 100.0$ m. In panel (a) $k$ is in logarithmic scale.  In panel (b) the graph is magnified near the origin of the vertical axis,  $k$ is in non-logarithmic scale.       }
		\label{fig:dispersion}
	\end{center}
\end{figure}

\section{Dynamical systems formulation}\label{SecDyn}
From the definitions \eqref{vel_pot}, and the solutions \eqref{phisol}, \eqref{phi1-A} we can directly compute expressions for velocity fields in the lower, and upper, fluid layers.  If $(x(t),y(t))$ denotes the trajectory of a fluid particle in the lower-fluid layer $\Omega$, with initial data $(x_0,y_0)$, then
\begin{eqnarray}
 \left\{\begin{array}{l}
   \frac{dx}{dt}=u=a\omega\cos (kx-\omega t)\frac{\cosh k(y+h)}{\sinh kh}  \\
   \phantom{5}
   \\ \label{LowSys1}
 \frac{dy}{dt}=v=a\omega \sin (kx-\omega t)\frac{\sinh k(y+h)}{\sinh kh},
  \end{array}\right.
\end{eqnarray}
for $-h<y<0$, while in the upper-fluid layer $\Omega_1$
\begin{equation}
 \left\{\begin{array}{lll}
  \frac{dx}{dt}=u_{1}=a_1\omega\cos (kx-\omega t)\left\{\sinh k(y-h_1)+A\cosh k(y-h_1) \right\} \\ \label{UpSys1}
 \frac{dy}{dt}=v_1=a_1\omega\sin (kx-\omega t)\left\{\cosh k(y-h_1)+A\sinh k(y-h_1) \right\},
  \end{array}\right.
\end{equation}
for $0<y<h_1$. The mean-level of the oscillating internal wave interface $y=\eta$ is located at $y=0$, whereas the free-surface $y=h_1+\eta_1$ oscillates about the mean-level located at $y=h_1$. 
Both \eqref{LowSys1} and \eqref{UpSys1} are nonlinear   and nonautonomous dynamical systems.

 Since the fluid layers are separated by an impermeable interface $y=\eta(x,t)$, and the solutions \eqref{phisol} and \eqref{phi1-A} satisfy matching conditions at this interface by design, we can address the phase plane analysis of system \eqref{LowSys1}  in the lower-fluid layer $\Omega$, and system \eqref{UpSys1} in the upper-fluid layer $\Omega_1$, separately in the first instance, and then piece together the information to get a picture of the motion of the entire two-layer body.

\section{Phase plane analysis}\label{secPP}
In order to  perform phase-plane analyses, the wavenumber $k$ and frequency $\omega$ are treated as fixed constants in the dynamical systems \eqref{LowSys1} and \eqref{UpSys1}, while  we vary the nondimensional parameter $A$ in  \eqref{UpSys1}, as required. It was shown in \cite{HV-JDE,HV-AN} that the dynamical system  \eqref{UpSys1} describing motion in the upper fluid layer has qualitatively different configurations, depending on whether $A<1$, $A=1$, or $A>1$. Theorem \ref{THM} now enables us to rule out the possibility that $A<1$ since it always holds that $A\geq1$.
 Of course, as detailed in Section \ref{SecDisp}, for a given wave motion $k$, $\omega$ and $A$ are neither fixed nor  free parameters  but, rather, must be determined by solving dispersion relations \eqref{A} and \eqref{eqA}.  

\subsection{Lower-fluid layer}\label{secPPlow}
Since the lower layer possesses just one moving boundary (the internal wave) compared to the upper layer's  two (the surface and internal waves), it is unsurprising that  the velocity field for the lower-fluid layer \eqref{LowSys1} is less complex in its prescription than that of the upper layer \eqref{UpSys1}. From a mathematical viewpoint, system \eqref{LowSys1} is qualitatively identical to that which describes fluid motion in a single homogenous  (uniform density) fluid layer whose upper interface separates the fluid from a source of constant pressure (such as  the atmosphere), cf. \cite{CV}.   The physical influence of the upper-fluid layer is conveyed implicitly to \eqref{LowSys1} by way of the dispersion relations \eqref{A} and \eqref{eqA}. As we are examining travelling waves, we can transform to a moving frame where the motion is steady by way of the change of variables
\begin{equation}\label{cotrans} 
X(t)=kx(t)-\omega t,\qquad Y(t)=k(y(t)+h),\end{equation}
(recall $c=\omega/k$) which transforms system \eqref{LowSys1} to the autonomous system
\begin{subequations}\label{LowSys2}
 \begin{align}[left ={\empheqlbrace}]
 \label{LowSys2-a}     
 \frac{dX}{dt}&=M\cos (X)\cosh(Y)-\omega,  \\  
 \label{LowSys2-b} \frac{dY}{dt}&=M\sin (X)\sinh(Y),
\end{align}
\end{subequations}
with $(X(0),Y(0))=(x_0,y_0)$, and where we denote
\begin{equation}\label{M}
M:=\frac{a k \omega }{\sinh (kh)}=\frac{a}{h}\cdot \frac{kh}{\sinh (kh)}\cdot \omega\ll \omega, \end{equation}
since $s<\sinh(s)$ for $s>0$ and  $a/h \sim \mathcal O(\epsilon) \ll1$ in the linear wave regime.
Since \eqref{LowSys2} is periodic in $X$   we focus on the strip $\{X:-\pi\leq X\leq\pi\}$,
and the change of variables \eqref{cotrans} transforms the lower-fluid layer to the region 
$\{Y:  0\leq Y\leq kh+\mathfrak e \cos(X)\}$, where we denote by $\mathfrak e=a k$  the {\em wave-steepness} parameter for the internal interface. Without loss of generality, we choose $a>0$ (hence $M>0$) throughout this subsection (a change in the sign of $M$  can be effectuated in \eqref{LowSys2} by simply shifting the $X$ variable by $\pi$: from \eqref{eta-ans}, the difference between choice of signs corresponds physically to choosing either the crest, or the trough, of the internal wave to be located at $X=0$, respectively).

The autonomous system \eqref{LowSys2} meets standard regularity assumptions for the uniqueness of the Cauchy problem \cite{Meiss}, therefore  the existence of unique local smooth solutions is  ensured by the Picard--Lindel\"of theorem \cite{Meiss}, and the trajectories of \eqref{LowSys2}  do not intersect.
The right-hand side of \eqref{LowSys2-a} is an even function in both $X$ and $Y$, while the right-hand side of \eqref{LowSys2-b} is an odd function of both $X$ and $Y$: therefore the trajectories of \eqref{LowSys2} have mirror symmetries with respect to both the $X-$ and $Y-$axes.
The dynamical system \eqref{LowSys2} has the standard (canonical) form  \cite{Meiss} of Hamilton's equations
\[
\dot X= \frac{\partial H}{\partial Y}, \qquad \dot Y = -\frac{\partial H}{\partial X}
\] 
 for the Hamiltonian function $H(X,Y)=M \sinh{Y}\cos X -\omega Y$. Hence, if  $\left(X(t),Y(t)\right)$ is a trajectory of \eqref{LowSys2}, then we have $\frac{d }{dt} H (X(t),Y(t))=\frac{\partial H}{\partial X} \dot X + \frac{\partial H}{\partial Y} \dot Y =-\dot Y \dot X + \dot X \dot Y=0$, and so singular points of \eqref{LowSys2}  correspond to critical points of $H$. The nature of these points can be determined from the Hessian of $H$, and the associated Morse index \cite{Mil}, which corresponds to the number of negative eigenvalues of the Hessian. 
 
 The $0$-isocline is defined to be the set where $dY/dt=0$, and the $\infty$-isocline is the set where $dX/dt=0$, and singular points for the system \eqref{LowSys2} occur if these two sets have a non-empty intersection.
It can be shown (see \cite{HV-JDE}) that the system \eqref{LowSys2} has one singular point $Q=(0,Y^*)$, where $Y^*=\cosh^{-1}\left({\omega}/{M}\right)$, which is a saddle point, and the phase portrait for the lower-fluid layer is given in Figure \ref{figPP-low}.
\begin{figure}[h!]
\begin{center}
 \resizebox{.6\textwidth}{!}{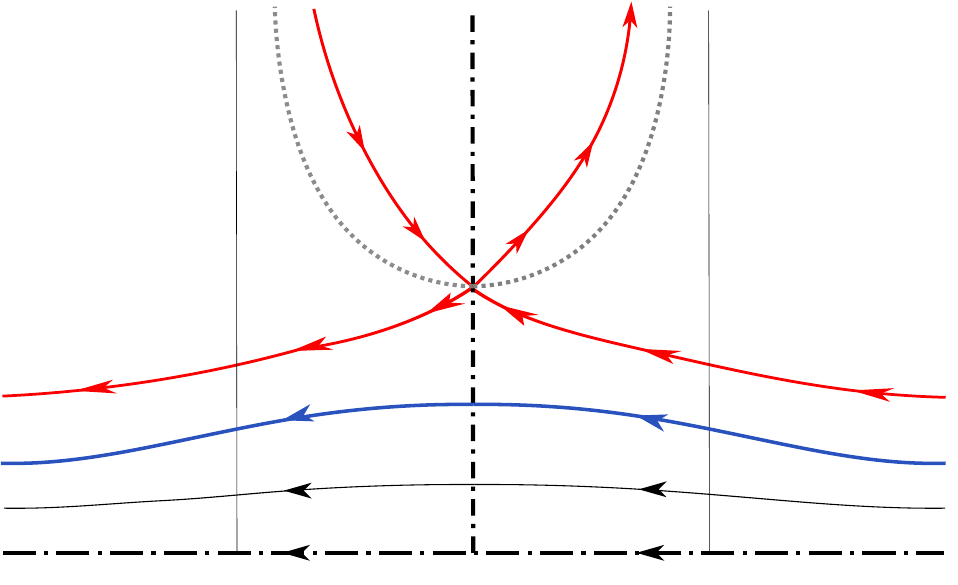}
\vspace*{5mm} 
 \caption[Cap]{
Phase portrait for the lower-fluid layer. The dotted grey line \tikz[baseline=-0.5ex]\draw[color=gray, very thick, densely dotted] (0,0) -- (.4,0); represents the $\infty-$isocline, with the dotted-dashed lines \tikz[baseline=-0.5ex]\draw[thick,dash dot] (0,0) -- (.5,0);  representing the $0-$isoclines. The internal wave profile (\tikz[baseline=-0.5ex] \draw[color=bleudefrance, very thick] (.0,0) -- (.7,0);) with mean-water level $Y=kh$ (corresponding to $y=0$) is also illustrated.
}
\label{figPP-low}
\end{center}
\end{figure}

\subsection{Upper-fluid layer}\label{secPP1}
Transforming to the moving frame by the change of variables
\begin{equation}\label{cotrans1} 
X(t)=kx(t)-\omega t,\qquad Y_1(t)=k(h_1-y(t)),\end{equation}
transforms system \eqref{UpSys1} to the autonomous system
\begin{align}[left ={\empheqlbrace}]
\label{UpSys2}
\frac{dX}{dt}&=F(X,Y_1):=M_1A\cos(X)\cosh (Y_1)-M_1\cos(X)\sinh (Y_1)-\omega
\\
\nonumber 
\frac{dY_1}{dt}&=G(X,Y_1):=M_1A\sin(X)\sinh (Y_1)-M_1\sin(X)\cosh (Y_1),
\end{align}
where we denote the parameter $M_1=a_1k\omega$.  Without loss of generality, we fix $M_1>0$ (and hence $a_1>0$) throughout subsequent considerations, with the sign of $a$ free to change (in order to match that of the ratio $a/a_1$ which is in turn prescribed by the value of $A$  via \eqref{A-a}). The sign of the ratio $a/a_1$ determines whether the surface and internal waves are coupled in-phase, or out-of-phase, a key characteristic concerning the qualitative nature of system \eqref{UpSys2}.
\begin{figure}[h!]
\begin{center}
 \hfill
    \subfloat[In-phase waves. \label{subfig-3b}]{%
      \resizebox{.33\textwidth}{!}{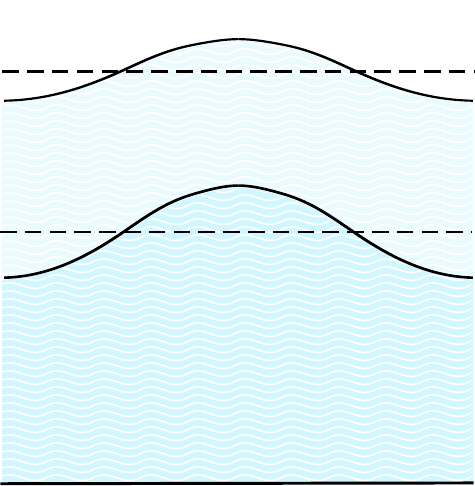}
    }
   \hfill
    \subfloat[Out-of-phase waves.  \label{subfig-3a}]{%
     \resizebox{.33\textwidth}{!}{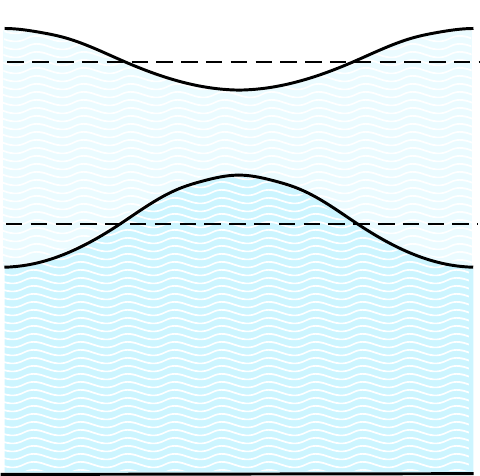}
    }
     \hfill
    \label{Fig3}
    \end{center}
    \caption{Coupled ``in phase'' and ``out-of-phase'' surface and internal water waves.}
  \end{figure}
The change of variables \eqref{cotrans1} reflects vertical coordinates through the line $y=h_1$ with the effect that wave crests, in terms of physical $(x,y)$ variables,   correspond to troughs when represented in terms of $(X,Y_1)$ variables, and vice-versa. Additionally, when expressed in terms of  $(X,Y_1)$ variables,  the streamline corresponding to the surface wave lies {\em beneath} that of the internal wave.
It is expedient to further re-express the right-hand sides of \eqref{UpSys2} as
\begin{align}   \label{FGa}
F(X,Y_1)=M_1\cos(X)f(Y_1)-\omega,  \quad  G(X,Y_1)=M_1\sin(X)g(Y_1),
\end{align}
for the functions
\begin{equation}   \nonumber   \label{FGb}
 f(Y_1):=A\cosh (Y_1)-\sinh (Y_1), \  g(Y_1):=A\sinh (Y_1)-\cosh (Y_1). 
\end{equation}
Note that $f'(Y_1)=g(Y_1)$, and $g'(Y_1)=f(Y_1)$. A key component in the construction of phase portraits for system \eqref{UpSys2} is the determination of the $0$-isocline (the set of points where the vector field is horizontal: $\dot Y_1=G(X,Y_1)=0$ in \eqref{FGa}) and the $\infty$-isocline (the set of points where the vector field is vertical: $\dot X=F(X,Y_1)=0$ in \eqref{FGa}). 
System \eqref{UpSys2}  is also a Hamiltonian system for which
\begin{equation}   \nonumber  \label{HAM1}
\dot X=F(X,Y_1)= \frac{\partial H_1}{\partial Y_1}, \qquad \dot Y_1 =G(X,Y_1)= -\frac{\partial H_1}{\partial X},
\end{equation}
where the Hamiltonian function is defined $H_1(X,Y_1) = M_1\cos{X}g(Y_1) - kc Y_1$.
Note that $H_1$ is constant along trajectories of \eqref{UpSys2}, and singular points of \eqref{UpSys2} correspond to critical points of $H_1(X,Y_1)$.

As \eqref{UpSys2} is periodic in $X$   we need only consider the strip $\{X:-\pi\leq X\leq\pi\}$
and, due to the definition of the $Y_1$ in \eqref{cotrans1}, the physically--relevant  solutions of \eqref{UpSys2}  will be located in the region $\{Y_1:  -\mathfrak e_1 \cos \left( X \right) \leq Y_1 \leq kh_1-\mathfrak  e \cos \left(X\right)\}$, where we denote by $\mathfrak e_1=a_1 k=2\pi \cdot a_1/\lambda$ the {\em wave-steepness} parameter for the surface. This non-dimensional parameter can be expressed $\mathfrak e_1=2\pi \cdot  \delta \cdot \epsilon_1$ in terms of the  wave-amplitude parameter  $\epsilon_1$,  and shallowness parameter  $\delta$: accordingly $\mathfrak e_1\ll 1$. We note that $M_1\ll \omega$ since $M_1/\omega=\mathfrak e_1$.
The autonomous system \eqref{UpSys2} meets the standard regularity assumptions for the uniqueness of the Cauchy problem \cite{Meiss}, therefore its trajectories do not intersect. Moreover, since $F(X,Y_1)$ is an even function, and  $G(X,Y_1)$ an odd function, with respect to $X$, any trajectory of system \eqref{UpSys2} is symmetric with respect to the $Y_1$-axis when viewed as a curve in the $(X,Y_1)-$phase plane.


\subsubsection{System \eqref{UpSys2} with $A=1$:} \label{ppA=1}
For $A=1$, we have $f(Y_1)=e^{-Y_1}=-g(Y_1)$ and the phase portrait for positive values of $Y_1$ rapidly converges to a series of flat, horizontal lines as $Y_1$ increases, since the velocity field converges exponentially fast to the uniform system $\dot{X}\equiv-\omega$ and $\dot{Y_1}\equiv0$. The point $Q_1=(0,Y_1^*)$ is a singular point, for $Y_1^*=-\ln\left(1/\mathfrak e_1\right)$, and it is a saddle point. The phase portrait for system \eqref{UpSys2} when $A=1$ is given in Figure \ref{figPP-up2}.
\begin{figure}[h!]
\begin{center}
 \resizebox{.6\textwidth}{!}{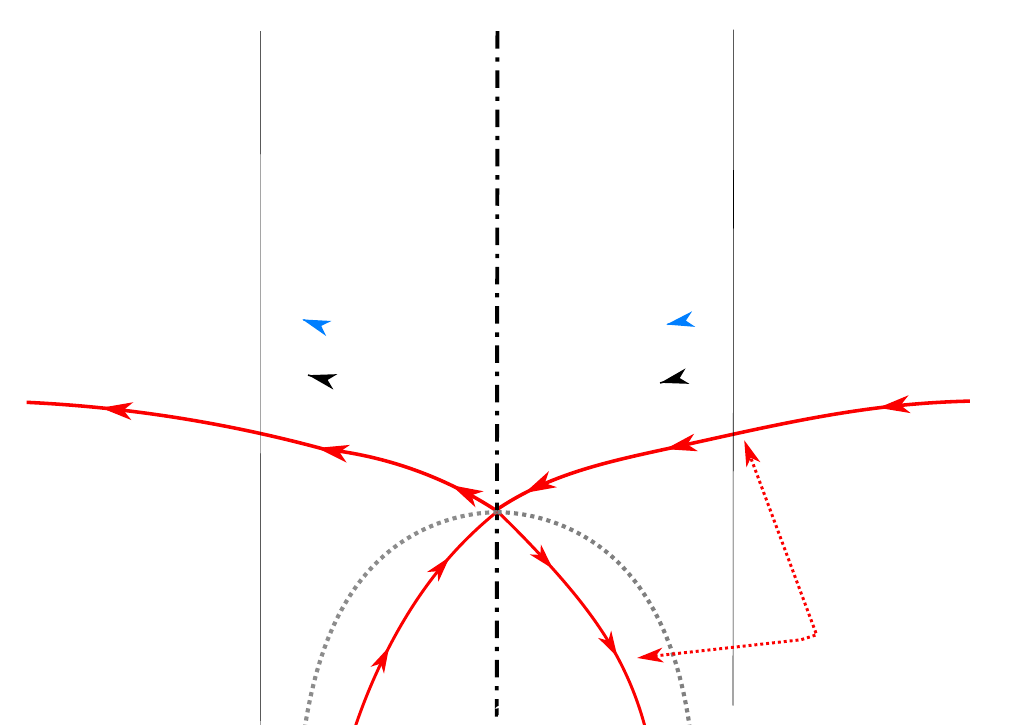}
  \caption[]{
Phase portrait of the upper-fluid layer when $A=1$. The dotted grey line \tikz[baseline=-0.5ex] \draw[color=gray, very thick, densely dotted] (.0,0) -- (.4,0) ; represents the $\infty-$isocline, while the dotted-dashed lines \tikz[baseline=-0.5ex] \draw[thick,dash dot] (.0,0) -- (.5,0) ;  represent the $0-$isoclines.  The {surface wave} profile (\tikz[baseline=-0.5ex] \draw[color=azure, very thick] (.0,0) -- (.7,0);) has mean-water level $Y_1=0$, corresponding to $y=h_1$. The internal wave profile (\tikz[baseline=-0.5ex] \draw[color=bleudefrance, very thick] (.0,0) -- (.7,0);) has mean water level $Y=kh_1$, corresponding to $y=0$.
}
\label{figPP-up2}
\end{center}
\end{figure}
We note that the the surface-wave streamline  must be located above the separatrix for physically--relevant solutions of \eqref{UpSys2}, and this is the case for  $-\mathfrak e_1>Y_1^*$, that is, 
\begin{equation}  \nonumber   
\mathfrak e_1 e^{\mathfrak e_1}<1.
\end{equation}


\subsubsection{System \eqref{UpSys2} with $A>1$:}\label{ppA>1}
There is a unique solution $\bar{Y_1}$ to $g({Y_1})=0$ for $A>1$ with \begin{equation}   \nonumber  
\bar{Y_1}=\frac{1}{2}\ln\left(\frac{A+1}{A-1}\right),
\end{equation} 
which is well-defined, and corresponds to $A=\coth(\bar{Y_1})$. It follows that $A \rightarrow 1 \Leftrightarrow \bar{Y_1} \rightarrow \infty$, while $A \rightarrow \infty  \Leftrightarrow \bar{Y_1} \rightarrow 0$.  
 Hence, the $0-$isocline is composed of  the vertical half-lines $X=0$, $X=\pi$, and the horizontal line-segment $Y_1=\bar{Y_1}$.   
The study of the $\infty$-isocline, where $\dot X=F(X,Y_1)=0$, can be achieved through examining $f(Y_1)$, a schematic for which  is given in Figure \ref{figf(Y)}.
\begin{figure}[h!]
\begin{center}
 \resizebox{.6\textwidth}{!}{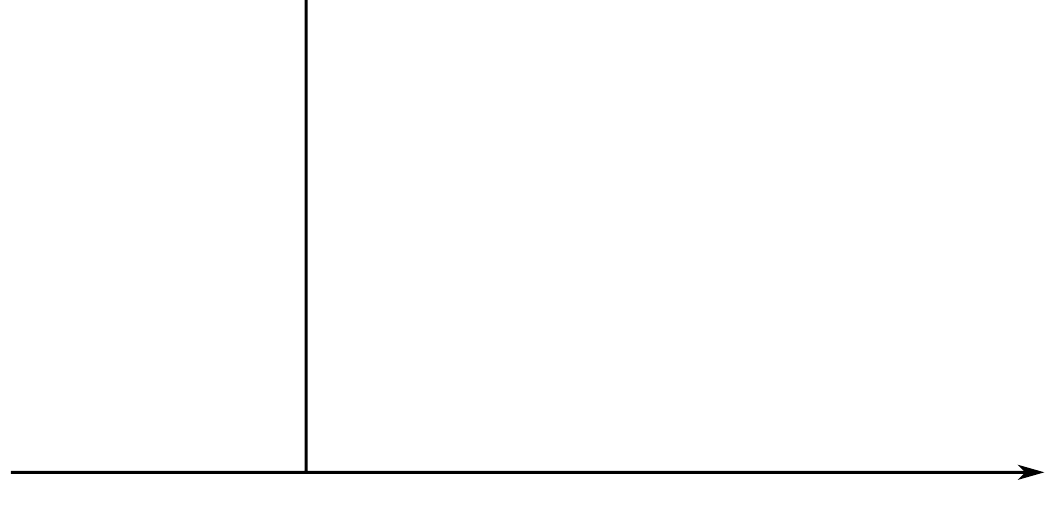}
\caption{
Schematic of $f(Y_1)$ for $A>1$, where $f(0)=A$, with $A=\cosh(\bar{Y_1})/\sinh(\bar{Y_1})$, and the minimum value attained is $f(\bar{Y_1})=1/\sinh(\bar{Y_1})$.
}\label{figf(Y)}
\end{center}
\end{figure}
If $\mathfrak e_1<\max_{Y_1} 1/f(Y_1)=\sinh(\bar{Y_1})$, then there exists a pair of values, $\tilde{Y^*_1}$, $Y^*_1$ with $\tilde{Y^*_1}\leq Y^*_1$, say, such that $f(\tilde{Y^*_1})=f(Y^*_1)=1/\mathfrak e_1$. The system \eqref{UpSys2} then has singular points at $\tilde{Q_1}=(\pi,\tilde {Y_1^*})$, $Q_1=(\pi,Y_1^*)$, both of whom are saddle points. The phase portrait of system \eqref{UpSys2} is given in Figure \ref{figPP-up3}.
\begin{figure}[h!]
\begin{center}
 \resizebox{.6\textwidth}{!}{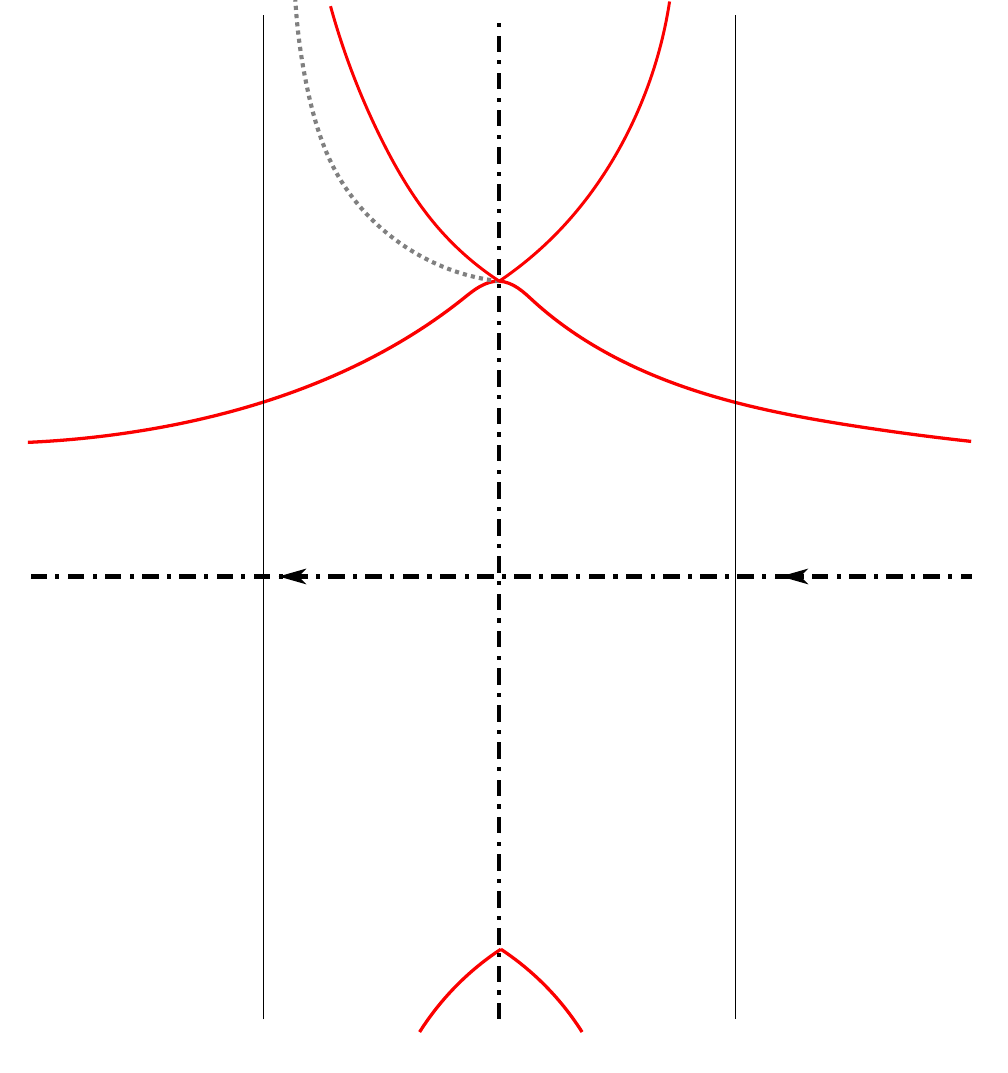}
  \caption[]{
Phase portrait for the upper-fluid layer when $A>1$. The dotted grey lines \tikz[baseline=-0.5ex] \draw[color=gray, very thick, densely dotted] (.0,0) -- (.4,0) ; represent the $\infty-$isoclines, with the dotted-dashed lines \tikz[baseline=-0.5ex] \draw[thick,dash dot] (.0,0) -- (.5,0) ;  representing the $0-$isoclines. The {surface wave} profile (\tikz[baseline=-0.5ex] \draw[color=azure, very thick] (.0,0) -- (.7,0);) has mean-water level $Y_1=0$, corresponding to $y=h_1$. The internal wave profile is illustrated for two differing values of the mean water level $Y=kh_1$, corresponding to $y=0$: $kh_1 \leq \bar{Y_1}$ in Case 1 (\tikz[baseline=-0.5ex] \draw[color=bleudefrance, dashed, dash pattern=on 8pt off 3pt, very thick] (.0,0) -- (.7,0);), whereas  {$kh_1 > \bar{Y_1}$} for Case  2 (\tikz[baseline=-0.5ex] \draw[color=ballblue, dashed, dash pattern=on 4pt off 1pt, very thick] (.0,0) -- (.7,0);).}
\label{figPP-up3}
\end{center}
\end{figure}

The surface wave profile has a mean-water level $Y_1=0$, which corresponds to $y=h_1$, while the internal wave profile has a mean water level $Y=kh_1$, which corresponds to $y=0$. From the phase portrait in Figure \ref{figPP-up3} we see that there are two qualitatively different fluid motions possible in the upper-fluid layer, which depends on the location of $Y=kh_1$.

In Case 1, {$kh_1 \leq \bar{Y_1}$}  and the internal wave profile is in-phase with the surface wave: in this case, all streamlines have their crests located at $X=0$.  If {$kh_1=\bar{Y_1}$}  then the internal interface is a flat horizontal line. Whenever $kh_1 \leq \bar{Y_1}$, there is an additional bound on $\mathfrak e$, namely $\mathfrak e <kh_1-\bar{Y_1}$.
In Case 2, $kh_1 > \bar{Y_1}$ and the internal wave is now out-of-phase with the surface wave. In this scenario, all streamlines beneath the line $Y_1=\bar{Y_1}$ have their crest located at $X=0$, with the amplitudes diminishing steadily until they vanish at the $0-$isocline $Y_1=\bar{Y_1}$.
As we move above  $Y_1=\bar{Y_1}$ the amplitude of the streamlines increase steadily, until the reach a maximum at $Y_1=kh_1$, which corresponds to the internal wave profile. However, the vertical velocity reverses direction as we pass $Y_1=\bar{Y_1}$, and so the line $X=0$ now corresponds to the wave trough.

In order for system \eqref{UpSys2} to describe physically--relevant solutions, the singular points $\tilde{Q_1}$ and $Q_1$ must lie outside the upper-fluid layer, hence we must have $\tilde {Y_1^*}<-\mathfrak e_1$ and $kh_1 +\mathfrak e<Y_1^*$ or, equivalently (cf. Figure \ref{figf(Y)}),
\begin{equation}   \nonumber  
\max\left\{ f(-\mathfrak e_1), f(kh_1 +\mathfrak e) \right\}<1/\mathfrak e_1.
\end{equation}


\section{Particle trajectories}\label{TRAJ}
The phase-plane analysis of  Section \ref{secPP} furnishes us with an Eulerian description of the fluid motion, in the sense that the phase portraits in  Figures \ref{figPP-low}, \ref{figPP-up2}, \ref{figPP-up3} give us a qualitative depiction of the fluid streamlines in both the lower, and upper, fluid layers as viewed by a fixed observer in the $(X,Y)$, and $(X,Y_1)$, reference frames, respectively.
In this section a Lagrangian description of the fluid motion is provided, whereby the particle trajectories  traced by any fixed fluid particle  as time evolves are ascertained. Particle trajectories $(x(t),y(t))$ are prescribed by  nonautonomous systems  \eqref{LowSys1}, and \eqref{UpSys1}. The fact that we are dealing with nonautonomous motion complicates matters significantly, however we may use information from the autonomous systems \eqref{LowSys2}, and \eqref{UpSys2}, to infer a physical description of fluid motion by way of reversing transformations \eqref{cotrans}, and \eqref{cotrans1}, through
\begin{equation}
\label{revtrans} 
x(t)=\frac{X(t)}{k}+ct, \quad y(t)=\frac{Y(t)}{k}-h,
\end{equation}
in the lower-fluid layer, and 
\begin{equation}
\label{revtrans1} 
x(t)=\frac{X(t)}{k}+ct, \quad y(t)=h_1-\frac{Y_1(t)}{k},
\end{equation}
in the upper-fluid layer, respectively.  Note that the  reflection term  involved in the vertical coordinate transformation in \eqref{revtrans1} reverses the  vertical orientation of fluid motion when expressed in terms of the physical  $(x,y)$ coordinates, as opposed to the $(X,Y_1)$ coordinates.

One key question that may be addressed is whether, or not, fluid particles describe closed trajectories. The following lemma, which is stated for motion in the lower fluid layer, but applies equally  to fluid motion in  the upper layer, plays a central role in answering this question. Note that this result is independent of the linearity of solutions, and relies solely on periodicity considerations. Indeed, similar results play a key role in the analysis of particle trajectories beneath nonlinear Stokes waves \cite{Con-06,ConStr,Hen-06,Hen-08}, and we note that some patterns for Stokes waves resemble those found here \cite{IK-08}.
 Suppose  $(X(t),Y(t))$ describes a streamline in the lower-fluid layer such that $(X(0),Y(0))=(\pi,Y^0)$, and let $t_{Y^0}(-\pi)$ denote the time taken for the particle to intersect the line $X=-\pi$ (note that  $\dot X<0$ uniformly along fluid streamlines in the moving frames).
\begin{lemma}\label{period}
If the particle trajectory prescribed by $(x(t),y(t))$ is a closed path with period $\tau$ then, necessarily, we have $\tau=\frac{2\pi}{\omega}$. Conversely, suppose $t_{Y^0}(-\pi)=\frac{2\pi}{\omega}$, then the particle path prescribed by $(x(t),y(t))$ is closed.
\end{lemma}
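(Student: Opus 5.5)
The plan is to work in the moving frame \eqref{cotrans} and pass back to physical coordinates through \eqref{revtrans}, using two facts about the autonomous system \eqref{LowSys2}. First, \eqref{LowSys2} is $2\pi$-periodic in $X$. Second, $\dot X<0$ uniformly along the streamlines considered, because $M\ll\omega$ by \eqref{M}. Consequently a particle starting at $(\pi,Y^0)$ crosses the strip and reaches $(-\pi,Y^0)$ at time $T:=t_{Y^0}(-\pi)$; the $Y$-value is the same because the trajectory is symmetric about the $Y$-axis, or alternatively by periodicity and the conserved Hamiltonian $H$. By uniqueness for the Cauchy problem, the streamline then satisfies
\[
X(t+T)=X(t)-2\pi,\qquad Y(t+T)=Y(t)\qquad\text{for all } t .
\]
So $Y$ is $T$-periodic and $X$ advances by exactly $-2\pi$ over each time $T$. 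Moreover $T$ is the minimal period of $Y$ modulo the $X$-shift: if $(X,Y)$ returned to the same point modulo $2\pi$ in $X$ at any earlier time, the monotonicity of $X$ would rule this out.

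For the converse, suppose $T=2\pi/\omega$. Then \eqref{revtrans} gives
\[
x(t+T)=\frac{X(t)-2\pi}{k}+c(t+T)=x(t)-\frac{2\pi}{k}+\frac{\omega T}{k}=x(t),
\qquad y(t+T)=y(t),
\]
since $c=\omega/k$. Hence the particle path is closed with period $2\pi/\omega$.

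For the forward direction, suppose $(x(t),y(t))$ is closed with period $\tau$, meaning $x(t+\tau)=x(t)$ and $y(t+\tau)=y(t)$ for all $t$. Then $Y(t+\tau)=Y(t)$ and $X(t+\tau)=X(t)-\omega\tau$. Since $\omega\tau=kc\tau$, the point $(X(\tau),Y(\tau))$ is the image of $(X(0),Y(0))$ under the horizontal shift by $-\omega\tau$. I would then use the structure of the level sets of $H$ on the streamline. Because $X$ is strictly decreasing, the streamline is the graph of a function $Y=\Psi(X)$ that is $2\pi$-periodic in $X$. The trajectory returns to its original height after a shift of $\omega\tau$, and it keeps doing so for all $t$ when traversed over the time $\tau$. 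I would argue that the shift must therefore be an integer multiple $2\pi m$, with $\tau=mT$, which gives $\omega\tau=2\pi m$.

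The delicate step, which I expect to be the main obstacle, is to exclude $m\neq 1$ and so conclude $\tau=2\pi/\omega$. The idea is to compare the physical period with the dynamics. The particle velocity $(u,v)$ in \eqref{LowSys1} depends on $t$ only through $kx-\omega t$, so it is $2\pi/\omega$-periodic in $t$ at a fixed position. A closed orbit must therefore return to its starting position in phase with the forcing. If $m\ge2$, the orbit in physical space would have to visit the same position at an intermediate multiple of $T$ with the same phase $X$ mod $2\pi$. By uniqueness for \eqref{LowSys1}, this would make the minimal period divide $T$-multiples consistent with $\omega T$. This leads to $\tau$ being the minimal common period, and hence to $\omega\tau=2\pi$ when $\tau$ is taken to be the minimal period. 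I would try to make this rigorous by interpreting $\tau$ as the minimal period and using the strict monotonicity of $X$ together with the distinctness of physical positions within a single $X$-cycle.

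Finally, the same argument applies verbatim to the upper layer via \eqref{cotrans1} and \eqref{revtrans1}, since the reflection in $Y_1$ does not affect periodicity.
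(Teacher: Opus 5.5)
Your converse is correct, and it is exactly what the paper's one-line proof (periodicity of \eqref{LowSys2} in $X$ together with \eqref{revtrans}) has in mind. The forward direction, however, is left unfinished. You flag ``excluding $m\neq1$'' as the main obstacle and then offer only a heuristic (``in phase with the forcing'', a minimal period that divides ``$T$-multiples consistent with $\omega T$''), which is not an argument. In particular, the claim that the orbit revisits its starting position at an intermediate multiple of $T$ presupposes $kx(T)=kx(0)+\omega T-2\pi=kx(0)$, which is the identity $\omega T=2\pi$ you are trying to prove.

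That identity is already in your hands. Your relations $\tau=mT$ and $\omega\tau=2\pi m$ (with $m\ge1$ because $\tau,T>0$) immediately give $\omega T=2\pi$, i.e.\ $t_{Y^0}(-\pi)=2\pi/\omega$. This is precisely the form in which the lemma is used in Theorem~\ref{nopersol}. Your own converse computation then shows that $T$ is itself a period of $(x,y)$. So if $\tau$ is the minimal period (the only reading under which the statement holds, since $2\tau$ is also a period), then $\tau\le T=\tau/m$ forces $m=1$.

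The step you only announce also needs care: that $\Psi(s-\omega\tau)=\Psi(s)$ for all $s$ forces $\omega\tau\in2\pi\mathbb Z$. This uses that $\Psi$ has minimal period $2\pi$. That holds when $\Psi$ is strictly monotone on $[0,\pi]$, but it fails on the invariant horizontal streamlines: $Y\equiv0$ (the bed, which Theorem~\ref{nopersol} treats explicitly) and $Y_1\equiv\bar{Y_1}$ for $A>1$. An argument that works uniformly is to differentiate $X(t+\tau)=X(t)-\omega\tau$ to get $\dot X(t+\tau)=\dot X(t)$. With $Y(t+\tau)=Y(t)$ this gives $\cos(X(t)-\omega\tau)=\cos X(t)$ for all $t$, hence $\omega\tau\in2\pi\mathbb Z$, since $X(t)$ sweeps all of $\mathbb R$. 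This also shows directly that every period satisfies $\tau\ge2\pi/\omega$.

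Alternatively, for the core equivalence, set $p(t):=X(t)+2\pi t/T$. It is $T$-periodic by your relation $X(t+T)=X(t)-2\pi$, and \eqref{revtrans} gives $kx(t)=p(t)+(\omega-2\pi/T)\,t$. So a closed (hence bounded) path forces $\omega T=2\pi$, and conversely $\omega T=2\pi$ makes $(x,y)$ $T$-periodic, with no case distinction at all.
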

\begin{proof}
The proof follows from the periodicity of  the autonomous systems   \eqref{LowSys2}, or \eqref{UpSys2}, with respect to $X$, together with the definitions \eqref{revtrans} and \eqref{revtrans1}.
\end{proof}
Although they follow directly as in \cite{HV-JDE},  we include the proofs of Theorem \ref{nopersol} and Proposition \ref{propDrift} in the lower-fluid layer setting of Section \ref{secLLL} below, for completeness, and to illustrate the approach involved in establishing them. 
We will omit the corresponding proofs for the more convoluted setting of the upper fluid layer in Section \ref{secULL}, and refer the reader to \cite{HV-JDE} for details.

\subsection{Lower-fluid layer}\label{secLLL}
In the lower-fluid layer, focus is restricted to physically admissible streamlines for which $(X(0),Y(0))=(\pi,Y^0)$,  with $Y^0\in[0,k(h-a)]$. 
\begin{theorem}
\label{nopersol}
The system \eqref{LowSys1} has no  solutions $(x(t),y(t))$ which are periodic. Accordingly, there are no closed particle paths in the lower-fluid layer, instead all fluid particles experience a positive horizontal drift.
\end{theorem}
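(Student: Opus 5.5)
The argument reduces the question to a single crossing time. Fix a physically admissible streamline of the autonomous system \eqref{LowSys2} with $(X(0),Y(0))=(\pi,Y^0)$ and $Y^0\in[0,k(h-a)]$. Since $M\ll\omega$, we have $\dot X=M\cos X\cosh Y-\omega<0$ uniformly along it, so it reaches $X=-\pi$ at a finite time $T:=t_{Y^0}(-\pi)$. By the mirror symmetry of \eqref{LowSys2} and its $2\pi$-periodicity in $X$, we get $Y(T)=Y^0$. The solution then repeats with period $T$ in the sense $X(t+T)=X(t)-2\pi$, $Y(t+T)=Y(t)$. Inverting with \eqref{revtrans} gives
\[
x(t+T)-x(t)=-\frac{2\pi}{k}+cT,\qquad y(t+T)=y(t).
\]
By Lemma \ref{period}, the particle path is closed exactly when this horizontal displacement vanishes, that is when $T=2\pi/\omega$. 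The drift over one cycle is positive exactly when $T>2\pi/\omega$. So the whole theorem reduces to the strict inequality $T>2\pi/\omega$.

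To estimate $T$, I would write it as an integral over $X$:
\[
T=\int_{-\pi}^{\pi}\frac{dX}{\omega-M\cos X\cosh Y(X)}.
\]
The aim is to compare this with $\int_{-\pi}^{\pi}dX/\omega=2\pi/\omega$. The factor $M\cos X\cosh Y$ takes both signs, so pointwise comparison fails and the sign structure of the streamline must be used. I would use the symmetry $X\mapsto -X$ to restrict to $[0,\pi]$, and then pair $X$ with $\pi-X$, where $\cos$ changes sign. On $(0,\pi)$, equation \eqref{LowSys2-b} with $M>0$ and $Y>0$ gives $\dot Y>0$ in forward time. Since $\dot X<0$, $Y$ decreases as $X$ increases on $[0,\pi]$, so $Y(X)\ge Y(\pi-X)$ for $X\in[0,\pi/2]$. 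Hence the region where $\cos X>0$ carries the larger values of $\cosh Y$. Using the convexity of $s\mapsto 1/(\omega-s)$ for $s<\omega$ then gives
\[
\frac{1}{\omega-M\cos X\cosh Y(X)}+\frac{1}{\omega+M\cos X\cosh Y(\pi-X)}\ \ge\ \frac{2}{\omega}.
\]
The inequality is strict wherever $\cos X\neq 0$, and integrating yields $T>2\pi/\omega$.

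An alternative route uses the Hamiltonian $H(X,Y)=M\sinh Y\cos X-\omega Y$, which is constant along the streamline. Differentiating with respect to $X$ gives $dY/dX=\dot Y/\dot X$, which would express $\cosh Y$ along the curve and allow an explicit comparison. I would keep this in reserve in case the monotonicity pairing has gaps.

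The main obstacle is this pairing step. I need to verify carefully that $Y$ is monotone on each half-period with the direction claimed, including the boundary streamline $Y^0=0$, where $Y\equiv0$. That case must be handled separately: there $\dot X=M\cos X-\omega$, and
\[
T=\int_{-\pi}^{\pi}\frac{dX}{\omega-M\cos X}=\frac{2\pi}{\sqrt{\omega^2-M^2}}>\frac{2\pi}{\omega},
\]
which gives the strict inequality directly. Once $T>2\pi/\omega$ is established for every admissible $Y^0$, Lemma \ref{period} excludes closed paths and therefore periodic solutions. The displacement $cT-2\pi/k=c\,(T-2\pi/\omega)$ is then positive for $c>0$, which gives the positive horizontal drift.
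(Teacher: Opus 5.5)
Your argument is correct. It shares the paper's reduction: Lemma \ref{period}, $Y(T)=Y^0$, and the displacement $c(T-2\pi/\omega)$. It also shares the key geometric input, namely that $Y$ is monotone in $X$ on each half-period, so $\cosh Y$ is larger on the half where $\cos X>0$. The two proofs differ only in how that input is turned into $T>2\pi/\omega$.

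The paper freezes $Y$ at its value $\mathcal Y$ on $X=\pi/2$. This gives the pointwise bound $\dot X\ge M\cosh(\mathcal Y)\cos X-\omega$, and an ODE comparison with $\dot{\mathfrak X}=M\cosh(\mathcal Y)\cos\mathfrak X-\omega$ finishes the argument. The crossing time of that equation, $2\pi/\sqrt{\omega^2-M^2\cosh^2\mathcal Y}$, is computed in closed form with the same arctan integral used for the bed.

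You instead fold the integral for $T$ under $X\mapsto -X$ and $X\mapsto \pi-X$, and apply strict convexity of $\phi(s)=1/(\omega-s)$ pointwise. This needs neither a comparison principle nor any explicit integration. It also covers the bed streamline $Y^0=0$ without a separate case: there $Y(X)=Y(\pi-X)$, and strictness still comes from $\cos X\neq 0$.

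The paper's route has the advantage of an explicit quantitative lower bound on the period and drift, but yours recovers the same bound. Set $s_1=M\cos X\cosh Y(X)$ and $s_2=-M\cos X\cosh Y(\pi-X)$. Since $s_2\ge -s_1$ and $\phi$ is increasing, $\phi(s_1)+\phi(s_2)\ge 2\omega/(\omega^2-s_1^2)$. On $[0,\pi/2]$ we have $s_1\ge M\cos X\cosh\mathcal Y$, and integrating gives $T\ge 2\pi/\sqrt{\omega^2-M^2\cosh^2\mathcal Y}$.

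Two small points:
\begin{itemize}
\item Convexity alone only gives $\phi(s_1)+\phi(s_2)\ge 2\phi\big((s_1+s_2)/2\big)$. You should state explicitly that $\phi$ is increasing and that the midpoint is nonnegative; the latter is exactly what $Y(X)\ge Y(\pi-X)$ supplies.
\item $\dot X<0$ along the streamline really requires $M\cosh Y<\omega$ there, not merely $M\ll\omega$. The paper uses the same assumption without comment.
\end{itemize}
The Hamiltonian fallback you mention is not needed.
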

\begin{proof} 
Bearing in mind Lemma \ref{period}, it suffices to show that $t_{Y^0}(-\pi)>\frac{2\pi}{\omega}$ in order to prove the theorem.
We start with the case $Y^0=0$, where the streamline is located on the flat bed and so $Y(t)\equiv0$ and $X(t)$ can be explicitly obtained by solving $\dot X=M\cos(X)-\omega$, with $X(0)=\pi$.
It follows that 
\begin{equation}
\label{tbed} 
t_{0}(-\pi)=\int^{\pi}_{-\pi}\frac{ds}{\omega-M\cos(s)}={2\pi}\sqrt{\frac{1}{\omega^2-M^2}}>\frac{2\pi}{\omega}, 
\end{equation}
where we use the fact that $M<\omega$ (by \eqref{M})  and   the integral   \[
\int^z_0\frac{ds}{\omega-M\cos(s)}={2}\sqrt{\frac{1}{\omega^2-M^2}}\arctan\left(\sqrt{\frac{\omega+M}{\omega-M}}\tan\left(\frac{z}{2}\right) \right), \quad z>0.
\]
For  $Y^0\in[0,kh- \mathfrak e]$, $dY/dt>0$ for $X\in (0,\pi)$, and $dY/dt<0$ when $X\in (-\pi,0)$. If   this streamline  intersects the line $X=\frac{\pi}{2}$ at the value $Y=\mathcal Y$, then $(X(t),Y(t))$ lies below the line $Y=\mathcal Y$ for $X(t)\in [-\pi, -\frac{\pi}{2})\cup(\frac{\pi}{2},\pi]$, and lies above the line for $X(t)\in (-\frac{\pi}{2},\frac{\pi}{2})$. Thus,
\begin{equation}\label{x>barx} 
\dot X=M\cosh(Y)\cos(X)-\omega\geq M\cosh(\mathcal Y)\cos(X)-\omega, \quad t\geq 0.
\end{equation}
Introducing the differential equation  $\dot{\mathfrak X}=M\cosh (\mathcal Y)\cos (\mathfrak X)-\omega$,
 with $\mathfrak X (0)=\pi$, 
it follows immediately from \eqref{x>barx} and the fact that $X(0)=\mathfrak X(0)=\pi$ that $X(t)\geq \mathfrak X(t)$ for $t\geq 0$. Therefore $t_{Y^0}(-\pi)> t^*$, where $t^*$ is the time it takes for $\mathfrak X (t^*)=-\pi$.
In a manner similar to solving \eqref{tbed}, the value of $t^*$ can be explicitly computed as being 
\[
t^*={2\pi}\sqrt{\frac{1}{\omega^2-M^2\cosh^2(\mathcal Y)}}>\frac{2\pi}{\omega},
\]
and hence $t_{Y^0}(-\pi)>\frac{2\pi}{\omega}$. This completes the proof.
\end{proof}
The analysis of system \eqref{LowSys2} undertaken in Section \ref{secPP}, coupled with \eqref{revtrans} and Theorem \ref{nopersol}, facilitates a qualitative description of physical particle motion  in the lower-fluid layer as  prescribed by \eqref{LowSys1}. Assume a fluid particle is initially at its greatest depth $y(0)=y_0$: we label this position $A$.  This corresponds to $X(0)=\pi$, and since $\dot X<0$ along streamlines it follows that,   in the moving frame, the variable $X(t)$ decreases continuously from $\pi$ to $-\pi$, and we have:  $\dot x <0$, $\dot y >0$ for $X(t)\in(\pi/2,\pi)$; $\dot x>0$, $\dot y >0$ for $X(t)\in(0,\pi/2)$;   $\dot x >0$, $\dot y<0$ for $X(t)\in(-\pi/2,0)$;  $\dot x <0$, $\dot y<0$ for $X(t)\in(-\pi,-\pi/2)$.  The particle  returns to its lowest position in the fluid layer (with depth $y=y_0$) at time $t=t_{Y^0}(-\pi)>{2\pi}/{\omega}$, having experienced a positive horizontal drift: we label this position $B$.
\begin{figure}[H]
\begin{center}
 \resizebox{.32\textwidth}{!}{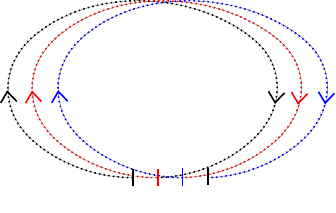}
  \caption[]{Schematic of a typical particle trajectory in the lower-fluid layer, representing its location at time: $t=0$ ($A$);  $t=t_{Y_0}(-\pi)$ ($B$); $t=2t_{Y_0}(-\pi)$ ($C$);  $t=3t_{Y_0}(-\pi)$ ($D$).
}
\label{figPartLow}
\end{center}
\end{figure}
\begin{proposition}\label{propDrift}
The horizontal drift experienced by a fluid particle in the lower fluid layer over one wave period decreases strictly with depth.
\end{proposition}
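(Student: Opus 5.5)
By \eqref{revtrans}, a particle starting at $(X,Y)=(\pi,Y^0)$ returns to its lowest level after time $t_{Y^0}(-\pi)$, during which $X$ decreases by $2\pi$. Its horizontal drift over one wave period is therefore
\[
d(Y^0)=x(t_{Y^0}(-\pi))-x(0)=-\frac{2\pi}{k}+c\,t_{Y^0}(-\pi)=\frac{\omega\, t_{Y^0}(-\pi)-2\pi}{k}.
\]
This is positive by Theorem \ref{nopersol}. Greater depth corresponds to smaller $Y^0$, so the proposition is equivalent to showing that $Y^0\mapsto t_{Y^0}(-\pi)$ is strictly increasing on $[0,k(h-a)]$. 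The plan is to express this time as an integral over $X$ and differentiate it in $Y^0$, using the Hamiltonian $H(X,Y)=M\sinh Y\cos X-\omega Y$ to control every quantity.

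\emph{Step 1 (time as an integral).} Along a streamline we have $\dot X<0$, so the streamline can be written as a graph $Y=Y(X;Y^0)$. By the mirror symmetry of \eqref{LowSys2} in $X$,
\[
t_{Y^0}(-\pi)=2\int_0^{\pi}\frac{dX}{\omega-M\cos X\cosh Y(X;Y^0)}.
\]
\emph{Step 2 (variation of the streamline).} Differentiate the conserved identity $H(X,Y(X;Y^0))=H(\pi,Y^0)$ with respect to $Y^0$. Writing $W:=\partial Y/\partial Y^0$, this gives
\[
W(X)=\frac{M\cosh Y^0+\omega}{\omega-M\cos X\cosh Y}>0.
\]
This is smooth in $Y^0$ because the denominator is bounded away from zero in the physical region, which lies below the saddle $Q$. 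Differentiating under the integral sign then yields
\[
\frac{d}{dY^0}t_{Y^0}(-\pi)=2M\,(M\cosh Y^0+\omega)\int_0^{\pi}\frac{\cos X\,\sinh Y}{(\omega-M\cos X\cosh Y)^3}\,dX .
\]

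\emph{Step 3 (sign of the integral; the main step).} The integrand changes sign at $X=\pi/2$, so I would pair each $X\in(0,\pi/2)$ with $\pi-X$. Set $C=\cos X>0$, $y_1=Y(X)$ and $y_2=Y(\pi-X)$. Conservation of $H$ gives $MC\sinh y_1-\omega y_1=-MC\sinh y_2-\omega y_2$, that is,
\[
\omega(y_1-y_2)=MC(\sinh y_1+\sinh y_2)\ge 0,
\]
so $y_1\ge y_2\ge 0$. The paired contribution to the integrand is
\[
\frac{C\sinh y_1}{(\omega-MC\cosh y_1)^3}-\frac{C\sinh y_2}{(\omega+MC\cosh y_2)^3}.
\]
Because $\sinh y_1\ge\sinh y_2$ and $0<\omega-MC\cosh y_1<\omega+MC\cosh y_2$, this sum is nonnegative. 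It is strictly positive, since the denominators are strictly ordered whenever $C>0$. This holds even for $Y^0=0$, where $y_1=y_2=0$ and the integrand vanishes identically, so a separate check is needed there. In that case \eqref{tbed} together with its analogue for small $Y^0>0$ shows directly that $t$ increases, or one can instead use the strict positivity of the derivative for all $Y^0>0$. Hence $t_{Y^0}(-\pi)$ is strictly increasing in $Y^0$.

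Consequently $d(Y^0)$ strictly increases with height, which is the same as strictly decreasing with depth. The delicate point is Step 3: pairing $X$ with $\pi-X$ and obtaining the ordering $y_1\ge y_2$ from the Hamiltonian. I would present that argument in full and treat the differentiability in Step 2 as routine.
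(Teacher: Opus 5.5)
Your proof is correct and has the same skeleton as the paper's. Both write the period, or equivalently the drift, as an integral over $X\in[0,\pi]$, linearize in the starting level, and get positivity from the sign change of $\cos X$ at $\pi/2$ together with the streamline being higher over $(0,\pi/2)$ than over $(\pi/2,\pi)$.

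The execution differs in two places.

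\emph{The linearization.} The paper passes to the limit $Y^1\to Y^0$ by dividing the integrand pointwise by $Y-\tilde Y$, which depends on $X$. It also writes the limit with $\cosh Y$ where the difference quotient of $\cosh$ gives $\sinh Y$. Your Step 2 is the correct linearization: it carries the weight $W=\partial Y/\partial Y^0=(M\cosh Y^0+\omega)/(\omega-M\cos X\cosh Y)$ obtained from conservation of $H$, and this is what produces your cubic denominator. Dividing a sign-changing integrand by a non-constant positive function need not preserve the sign of the integral, so your computation supplies a justification the paper omits. The omission is harmless in the end, because $W$ exceeds its value $1+\frac{M}{\omega}\cosh Y^0$ at $X=\pi/2$ exactly where the integrand is positive.

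\emph{The closing positivity step.} The paper replaces $(\omega-M\cosh Y\cos X)^2$ by $\omega^2$, which bounds the integrand from below on both halves of the interval. It then compares with the flat level $\mathcal Y=Y(\pi/2)$, so the bound integrates $\cos X$ to zero. You instead pair $X$ with $\pi-X$ and get $y_1\ge y_2$ from conservation of $H$. The two devices are interchangeable. The paper's bound also works for your cubic integrand, since the cube of $\omega-M\cos X\cosh Y$ lies below or above $\omega^3$ according as $\cos X$ is positive or negative.

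\emph{The case $Y^0=0$.} Your remark there is misworded: strict positivity fails at $Y^0=0$, it does not ``hold even'' there. The second option you give is all that is needed. Since $t'>0$ on $(0,k(h-a)]$ and $t$ is continuous at $0$, the mean value theorem gives strict monotonicity on the closed interval, so no appeal to \eqref{tbed} is required.
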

\begin{proof}
The horizontal drift experienced by a fluid particle over one wave period is given by $D(Y^0)=x(t_{Y^0}(-\pi))-x(\pi)=(\omega t_{Y^0}(-\pi)-2\pi)/k>0$, which can be expressed from \eqref{LowSys2-a}  as
\begin{equation*}
D(Y^0)=2\int_{0}^{\pi}\frac{M\cosh(Y)\cos(X) dX}{\omega-M\cosh(Y)\cos(X)}
\end{equation*}
Let $Y^1<Y^0$, with $\tilde Y=\tilde Y(X)$ denoting the streamline for which $\tilde Y(\pi)=Y^1$.
\begin{align*}
D(Y^0)-D(Y^1)=2\int_{0}^{\pi}\!\!\frac{\omega M\left(\cosh(Y)-\cosh(\tilde Y)\right)\cos(X)}{\left(\omega-M\cosh(Y)\cos(X)\right)\left(\omega-M\cosh(\tilde Y)\cos(X)\right)}dX,
\end{align*}
which, in the limit $Y^1\to Y^0$, has the same sign as 
\begin{align*}
\lim_{\tilde Y\to Y} 2\int_{0}^{\pi}\frac{\omega M\left(\cosh(Y)-\cosh(\tilde Y)\right)\cos(X)}{\left(\omega-M\cosh(Y)\cos(X)\right)\left(\omega-M\cosh(\tilde Y)\cos(X)\right)(Y-\tilde Y)}dX
\\
=2\int_{0}^{\pi}\frac{\omega M\cosh(Y)\cos(X)}{\left(\omega-M\cosh(Y)\cos(X)\right)^2}dX
>2\int_{0}^{\pi}\frac{\omega M\cosh(Y)\cos(X)}{\omega^2}dX
\\
>2\int_{0}^{\pi}\frac{M}{\omega}\cosh(\mathcal Y)\cos(X)dX=0.
\end{align*}
Thus $D(Y^0)>D(Y^1)$, and the particle drift is decreasing with depth.
\end{proof}

\subsection{Upper-fluid layer}\label{secULL}
In the upper-fluid layer, focus is restricted to physically admissible streamlines for which $(X(0),Y_1(0))=(\pi,Y_1^0)$,  with $Y_1^0\in[\mathfrak e_1,kh_1+\mathfrak e]$ if $a/a_1>0$, whereas $Y_1^0\in[\mathfrak e_1,kh_1-\mathfrak e]$ if $a/a_1<0$.
The horizontal drift $D(Y_1^0)=x(t_{Y_1^0}(-\pi))-x(\pi)$ experienced by a fluid particle over one wave period in the upper fluid layer is given by
\begin{equation*}
D(Y_1^0)=2\int_{0}^{\pi}\frac{M_1f(Y_1)\cos(X) dX}{\omega-M_1f(Y_1)\cos(X)}.
\end{equation*}
If $Y_1^1<Y_1^0$ then, in the limit $Y_1^1\to Y_1^0$, the difference between the drifts $D(Y_1^0)-D(Y_1^1)$ has the same sign as 
\begin{align}
2\int_{0}^{\pi}\!\!\!\frac{\omega M_1g(Y_1)\cos(X)}{\left(\omega-M_1f(Y_1)\cos(X)\right)^2}dX.
\label{DDD}
\end{align}
As with the phase-plane analysis of Section \ref{secPP1}, the qualitative behaviour of fluid particles in the upper-layer is markedly different  depending on the  parameter values $A=1$, $A>1$: we can again rule out the possibility $A<1$ (considered in \cite{HV-JDE,HV-AN}) due to Theorem \ref{THM}. We deal with these cases separately. 
\subsubsection{$A> 1$}
\begin{theorem}
\label{nopersol1}
Let $A> 1$. There are no closed particle paths in the upper-fluid layer whose motion is governed by the system \eqref{UpSys1}. That is, the system \eqref{UpSys1} has no  solutions $(x(t),y(t))$ which are periodic.
\end{theorem}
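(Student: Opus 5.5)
By Lemma \ref{period}, it suffices to show that every physically admissible streamline of \eqref{UpSys2} starting at $(X(0),Y_1(0))=(\pi,Y_1^0)$ satisfies $t_{Y_1^0}(-\pi)>\frac{2\pi}{\omega}$, so that no particle returns to its initial position after one period. The plan follows the proof of Theorem \ref{nopersol}: compare the true $X$-dynamics with a frozen-level equation of the form $\dot{\mathfrak X}=M_1 f(\mathcal Y)\cos(\mathfrak X)-\omega$, whose transit time can be computed explicitly.

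\textbf{Step 1: the bound $0<M_1f(Y_1)<\omega$ in the layer.} For $A>1$ we have $f(Y_1)=A\cosh Y_1-\sinh Y_1\geq 1/\sinh(\bar{Y_1})>0$ (cf. Figure \ref{figf(Y)}). The physical admissibility condition $\max\{f(-\mathfrak e_1),f(kh_1+\mathfrak e)\}<1/\mathfrak e_1$ from Section \ref{ppA>1}, together with the convexity of $f$, gives $f<1/\mathfrak e_1$ throughout the layer. Since $M_1=\mathfrak e_1\omega$, this is exactly $M_1 f(Y_1)<\omega$. Hence $\dot X<0$ along every admissible streamline.

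\textbf{Step 2: the key geometric inequality.} I claim that
\[
f(Y_1(t))\cos X(t)\geq f(\mathcal Y)\cos X(t)
\]
holds along the streamline, where $\mathcal Y$ is the value of $Y_1$ where the streamline crosses $X=\pi/2$. Verifying this is the main step, since $f$ is not monotone. I would split into three cases according to the position of the streamline relative to $\bar{Y_1}$; a streamline cannot cross the $0$-isocline $Y_1=\bar{Y_1}$ away from $X=0,\pi$.

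In the first case the streamline lies below $\bar{Y_1}$. There $g<0$, so for $X\in(0,\pi)$ we have $\dot Y_1=M_1\sin X\, g<0$. As $X$ decreases from $\pi$ to $0$, $Y_1$ decreases. Thus $Y_1\leq\mathcal Y$ on $(0,\pi/2)$ and $Y_1\geq\mathcal Y$ on $(\pi/2,\pi)$. Since $f'=g<0$ on this range, $f(Y_1)\geq f(\mathcal Y)$ where $\cos X>0$ and $f(Y_1)\leq f(\mathcal Y)$ where $\cos X<0$. This gives the claimed inequality.

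In the second case the streamline lies above $\bar{Y_1}$. There $g>0$ and every monotonicity reverses: $Y_1$ increases as $X$ decreases on $(0,\pi)$, and $f$ is increasing. The two sign reversals cancel, so the same inequality follows.

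In the third case the streamline is $Y_1\equiv\bar{Y_1}$, and equality holds trivially. The interval $X\in(-\pi,0)$ is handled by the mirror symmetry of trajectories about the $Y_1$-axis.

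\textbf{Step 3: comparison and conclusion.} Step 2 gives $\dot X\geq M_1f(\mathcal Y)\cos X-\omega$. Let $\mathfrak X$ solve $\dot{\mathfrak X}=M_1f(\mathcal Y)\cos\mathfrak X-\omega$ with $\mathfrak X(0)=\pi$. Comparison yields $X(t)\geq\mathfrak X(t)$ for $t\geq 0$, so $t_{Y_1^0}(-\pi)\geq t^*$. Using the integral evaluated in \eqref{tbed},
\[
t^*=\frac{2\pi}{\sqrt{\omega^2-M_1^2f(\mathcal Y)^2}}>\frac{2\pi}{\omega}.
\]
This inequality is strict because $f(\mathcal Y)>0$ and $M_1f(\mathcal Y)<\omega$ by Step 1. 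Lemma \ref{period} then rules out closed particle paths, and the particles experience a positive horizontal drift $D(Y_1^0)>0$.

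The main obstacle is Step 2: one must check that the change in monotonicity of $f$ at $\bar{Y_1}$ is exactly compensated by the reversal in the sign of $\dot Y_1$. I would also double-check the edge cases of streamlines touching the isocline at $X=0,\pi$, where I expect uniqueness of solutions to settle the matter.
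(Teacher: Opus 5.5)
Your proposal is correct and follows the paper's route: the frozen-level comparison of Theorem \ref{nopersol}, which the paper defers to \cite{HV-JDE} for the upper layer, adapted by the natural case split at the invariant line $Y_1=\bar{Y_1}$. The compensation you flag as the main obstacle in Step 2 follows in one line from $\frac{d}{dt}f(Y_1(t))=f'(Y_1)\dot Y_1=M_1\sin X\,g(Y_1)^2\geq 0$ for $X\in(0,\pi)$: $f(Y_1)$ increases along every streamline as $X$ decreases from $\pi$ to $0$, whichever side of $\bar{Y_1}$ it lies on, so both the case split and the edge-case check at $X=0,\pi$ become unnecessary.
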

The qualitative nature of fluid particle trajectories in the upper-fluid layer prescribed by \eqref{UpSys1} when $A>1$ can now be determined  from the analysis of system \eqref{UpSys2} performed in Section \ref{secPP1}, coupled with transformation \eqref{revtrans1} and Theorem \ref{nopersol1}. 
 For the sake of generality, we assume that $kh_1>\bar{Y_1}$: if the alternative is true, simply restrict attention to streamlines with $Y^0_1\in[\mathfrak e_1,\bar{Y_1})$ in this discussion.  For streamlines in this region  (which corresponds physically to the top section of the upper-fluid layer) the particle motion is captured in schematic $(a)$ of Figure \ref{figPartUp1}. 

At the $0-$isocline $Y=\bar{Y_1}$  fluid particles experience a forward horizontal drift $x(t_{\bar{Y_1}}(-\pi))-x(0)=\left({t_{\bar{Y_1}}(-\pi)\omega-2\pi}\right)/{k}>0$.
This motion is represented by schematic $(b)$ of Figure \ref{figPartUp1}. 

 For streamlines with  $Y_1^0\in (\bar{Y_1}, kh_1+\mathfrak  e]$ (which corresponds physically to the bottom region of the upper-fluid layer) particle motion is depicted in schematic $(c)$ of Figure \ref{figPartUp1}. 
\begin{figure}[h!]
\begin{center}
 \resizebox{.32\textwidth}{!}{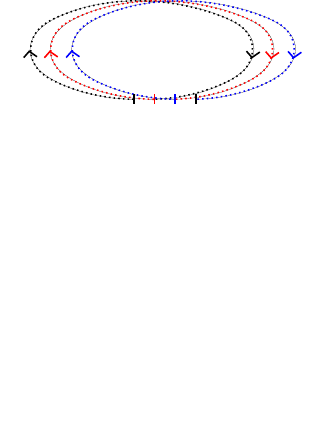}
  \caption[]{Schematics of typical  trajectories for particles $(a)$, $(b)$, and $(c)$ in the upper-fluid layer, for $A>1$ and assuming that  $kh_1>\bar{Y_1}$. Their location  at times $t=0,t_{Y_1^0}(-\pi),2t_{Y_1^0}(-\pi),3t_{Y_1^0}(-\pi)$ are denoted by $A,B,C,D$, respectively.
}
\label{figPartUp1}
\end{center}
\end{figure}

\begin{proposition}\label{DDE}
Suppose $A>1$. The horizontal drift experienced by a fluid particle in the upper fluid layer over one wave period decreases with depth for  fluid motion of the type illustrated in schematic (a) of Figure \ref{figPartUp1}, while it increases with depth for  fluid motion of the type illustrated in schematic (c) of Figure \ref{figPartUp1}. Accordingly, the minimum horizontal drift experienced by fluid particles occurs at $Y=\bar{Y_1}$, as illustrated in schematic (b) of Figure \ref{figPartUp1}.
\end{proposition}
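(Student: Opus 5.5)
The plan is to mimic the proof of Proposition~\ref{propDrift}, using the infinitesimal drift-difference formula \eqref{DDD}. Write the drift as
\[
D(Y_1^0)=2\int_0^\pi \frac{M_1 f(Y_1)\cos X}{\omega-M_1f(Y_1)\cos X}\,dX,
\]
where $Y_1=Y_1(X)$ is the streamline through $(\pi,Y_1^0)$. As in the lower layer, the sign of $D(Y_1^0)-D(Y_1^1)$ for $Y_1^1\to Y_1^0$ is then governed by \eqref{DDD}. Recall that increasing $Y_1$ corresponds to increasing physical depth, since $y=h_1-Y_1/k$. Hence ``decreasing with depth'' means the integral in \eqref{DDD} is negative, and ``increasing with depth'' means it is positive. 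The task therefore reduces to controlling the sign of
\[
I(Y_1^0):=\int_0^\pi \frac{g(Y_1(X))\cos X}{(\omega-M_1 f(Y_1(X))\cos X)^2}\,dX.
\]

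\textbf{Step 1: fix the sign of $g$ along a streamline.} Since $A>1$, $g(Y_1)=A\sinh Y_1-\cosh Y_1$ has the unique zero $\bar Y_1=\tfrac12\ln\frac{A+1}{A-1}$. It is negative for $Y_1<\bar Y_1$ and positive for $Y_1>\bar Y_1$. By the phase-plane analysis of Section~\ref{secPP1}, the line $Y_1=\bar Y_1$ is itself a trajectory, being part of the $0$-isocline with $\dot X=M_1 f(\bar Y_1)\cos X-\omega<0$. By uniqueness, streamlines of type (a) stay entirely in $\{Y_1<\bar Y_1\}$, and those of type (c) stay in $\{Y_1>\bar Y_1\}$. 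So $g(Y_1(X))$ has a constant sign along each streamline, and the whole question is the sign of the $\cos X$-weighted integral.

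\textbf{Step 2: compare the integral on $(0,\pi/2)$ with that on $(\pi/2,\pi)$.} This is the main obstacle, because $\cos X$ changes sign. I would use the monotonicity of the streamline exactly as in Theorem~\ref{nopersol}. On $(0,\pi)$, $\dot Y_1=M_1\sin X\, g(Y_1)$ has the sign of $g$, so $Y_1$ is monotone in $X$. Let $\mathcal Y$ be its value at $X=\pi/2$.

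For a type (a) streamline ($g<0$), I would show that $X\mapsto |g(Y_1(X))|/(\omega-M_1f\cos X)^2$ is larger on $(0,\pi/2)$, where $\cos X>0$, than at the reflected points $\pi-X$. Two facts should deliver this. First, $|g|$ is decreasing in $Y_1$ on $(-\infty,\bar Y_1)$ since $g'=f>0$. Second, the denominator is smaller where $\cos X>0$, provided $f>0$, which holds because $f\ge 1/\sinh\bar Y_1>0$. The first fact requires knowing which way $Y_1$ moves as $X$ goes from $\pi/2$ to $0$, and this has to be checked against the orientation of the streamline; that check is the delicate point. If the monotonicity of $|g|$ goes the wrong way, I would fall back on the symmetry of the integrand about $X=\pi/2$ at leading order, together with the smallness $M_1/\omega=\mathfrak e_1\ll1$.

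Concretely, the weight $(\omega-M_1f\cos X)^{-2}$ exceeds $(\omega+M_1f\cos X)^{-2}$ on $(0,\pi/2)$. This yields the strict inequality $I<0$ once $|g|$ is replaced by its value at $\mathcal Y$, in the same way that \eqref{x>barx} compared with $\mathcal Y$. The type (c) case is the mirror argument with $g>0$, giving $I>0$.

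\textbf{Step 3: conclude.} Combining the two cases, the drift is strictly decreasing with depth above $\bar Y_1$ and strictly increasing below it. By continuity of $D$ in $Y_1^0$, its minimum over the layer is attained at $Y_1^0=\bar Y_1$. On this streamline $Y_1\equiv\bar Y_1$ and the drift is explicit, $(\omega t_{\bar Y_1}(-\pi)-2\pi)/k$, which is schematic (b). The case $kh_1\le\bar Y_1$ needs only the type (a) part.
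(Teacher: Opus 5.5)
Your route is the paper's. The paper's proof consists of \eqref{DDD} together with the monotonicity of $f$ and $g$, with the details deferred to the earlier two-layer work; that is exactly your Steps 1--3 modelled on Proposition~\ref{propDrift}. As written, however, Step~2 stops at precisely the point that decides the sign. You leave open whether $|g(Y_1(X))|$ is larger on $(0,\pi/2)$ or on $(\pi/2,\pi)$, and you offer a fallback in case it ``goes the wrong way''. That check is the whole content of the proposition.

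It does go the right way in both cases. For $X\in(0,\pi)$ you have $\dot X<0$ and
\[
\frac{d}{dt}\,|g(Y_1)|=\mathrm{sgn}(g)\,f(Y_1)\,\dot Y_1=M_1 f(Y_1)\,|g(Y_1)|\sin X>0,
\]
so $X\mapsto |g(Y_1(X))|$ is decreasing on $(0,\pi)$. Concretely, in case (a) $Y_1$ decreases as $X$ runs from $\pi$ to $0$, and $|g|$ is decreasing in $Y_1$ below $\bar Y_1$. In case (c) $Y_1$ increases as $X$ runs from $\pi$ to $0$, and $g$ is increasing above $\bar Y_1$. The two reversals cancel, so $\bigl(|g(Y_1(X))|-|g(\mathcal Y)|\bigr)\cos X\ge 0$ on $(0,\pi)$. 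Combine this with the pointwise bound $\cos X/(\omega-M_1f\cos X)^2\ge \cos X/\omega^2$, which holds because $f>0$ and $\omega-M_1f\cos X>0$. This gives
\[
\int_0^\pi\frac{|g(Y_1)|\cos X}{(\omega-M_1f(Y_1)\cos X)^2}\,dX>\frac{|g(\mathcal Y)|}{\omega^2}\int_0^\pi\cos X\,dX=0 .
\]
Hence $I<0$ in case (a) and $I>0$ in case (c), with no smallness assumption beyond $\dot X<0$.

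You should drop the fallback rather than keep it in reserve. At leading order in $\mathfrak e_1$ the integral vanishes. The variation of $|g|$ along the streamline then enters at the same order $\mathcal O(\mathfrak e_1)$ as the asymmetry of the weight. So ``symmetry plus smallness'' cannot fix the sign without an explicit first-order expansion with error control, and even then it would only give the result for $\mathfrak e_1$ sufficiently small.

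A side remark, if you want the reduction to \eqref{DDD} to be exact rather than heuristic. Differentiating $D$ in $Y_1^0$ also produces the factor
\[
\frac{\partial Y_1(X)}{\partial Y_1^0}=\frac{\omega+M_1f(Y_1^0)}{\omega-M_1f(Y_1)\cos X},
\]
which follows from conservation of $H_1$ along streamlines. This only multiplies \eqref{DDD} by a positive constant and raises the power of the denominator to $3$, and the argument above goes through unchanged.
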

The proof follows from expression \eqref{DDD}, and monotonicity properties of $f(Y_1)$, $g(Y_1)$. We refer to \cite{HV-JDE} for full details.

\subsubsection{$A=1$}
Due  to the straightforward form that the functions $f(Y_1),g(Y_1)$ assume in the setting $A=1$, a comprehensive qualitative description of fluid motion  can be achieved.  
\begin{theorem}
\label{nopersol2}
Let $A=1$. There are no closed particle paths in the upper-fluid layer whose motion is governed by the system \eqref{UpSys1}. 
\end{theorem}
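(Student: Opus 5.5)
By Lemma \ref{period}, a closed particle path must have period $2\pi/\omega$, and the path through $(X(0),Y_1(0))=(\pi,Y_1^0)$ is closed exactly when $t_{Y_1^0}(-\pi)=2\pi/\omega$. So it suffices to show $t_{Y_1^0}(-\pi)>2\pi/\omega$ for every physically admissible streamline, that is, for every $Y_1^0\in[\mathfrak e_1,kh_1\pm\mathfrak e]$. This is the same strategy as in the proof of Theorem \ref{nopersol}, specialised to $A=1$.

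When $A=1$ we have $f(Y_1)=e^{-Y_1}$ and $g(Y_1)=-e^{-Y_1}$, so system \eqref{UpSys2} reads
\[
\dot X = M_1 e^{-Y_1}\cos X-\omega,\qquad \dot Y_1=-M_1 e^{-Y_1}\sin X .
\]
The singular point $Q_1$ lies outside the layer, so $M_1e^{-Y_1}<\omega$ along admissible streamlines and $\dot X<0$ there. We can therefore parametrise the streamline by $X\in[-\pi,\pi]$ and write
\[
t_{Y_1^0}(-\pi)=\int_{-\pi}^{\pi}\frac{dX}{\omega-M_1e^{-Y_1(X)}\cos X}.
\]
The streamline is symmetric about $X=0$. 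Moreover $\dot Y_1<0$ for $X\in(0,\pi)$, while $X$ decreases in time. Hence, as $X$ runs from $\pi$ down to $0$, $Y_1$ increases, and $Y_1(X)$ is decreasing in $|X|$ on $[-\pi,\pi]$.

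The key step compares the streamline with the level $\mathcal Y:=Y_1(\pi/2)$, as in \eqref{x>barx}. For $|X|<\pi/2$ we have $\cos X>0$ and $Y_1\ge\mathcal Y$, so $e^{-Y_1}\cos X\le e^{-\mathcal Y}\cos X$. For $|X|>\pi/2$ we have $\cos X<0$ and $Y_1\le\mathcal Y$, so again $e^{-Y_1}\cos X\le e^{-\mathcal Y}\cos X$. It follows that
\[
\dot X\le M_1e^{-\mathcal Y}\cos X-\omega .
\]
However, in the physical variables the sign convention is reversed relative to \eqref{x>barx}, since $x=X/k+ct$ and the drift is what we need to control. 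I would therefore work directly with the integral instead of the comparison ODE. Combining the two halves $X$ and $\pi-X$ for $X\in(0,\pi/2)$ gives
\[
t_{Y_1^0}(-\pi)=2\int_0^{\pi/2}\left[\frac{1}{\omega-p\cos X}+\frac{1}{\omega+q\cos X}\right]dX,
\qquad p=M_1e^{-Y_1(X)},\ q=M_1e^{-Y_1(\pi-X)} .
\]
Here $p\le q$, since $Y_1(X)\ge Y_1(\pi-X)$ by monotonicity in $|X|$. So this comparison alone does not immediately give $t_{Y_1^0}(-\pi)>2\pi/\omega$. This is the main obstacle. At this point I would switch to using the conserved Hamiltonian $H_1=-M_1\cos X\,e^{-Y_1}-\omega Y_1$.

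Writing $u=M_1e^{-Y_1}$, conservation of $H_1$ gives $u\cos X=-\omega Y_1-H_1$ along the streamline. Hence
\[
\frac{1}{\omega-u\cos X}=\frac{1}{\omega(1+Y_1)+H_1},
\]
and
\[
t=\int_{-\pi}^{\pi}\frac{dX}{\omega-u\cos X},\qquad \frac{\omega t-2\pi}{2}=\int_0^{\pi}\frac{u\cos X}{\omega-u\cos X}\,dX .
\]
Expanding to second order yields
\[
\frac{\omega t-2\pi}{2}=\int_0^\pi \frac{u\cos X}{\omega}\,dX+\int_0^\pi\frac{u^2\cos^2X}{\omega(\omega-u\cos X)}\,dX .
\]
The second integral is strictly positive. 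For the first, I would show $\int_0^\pi u\cos X\,dX\ge0$ by noting that $u$ is larger where $\cos X>0$. Indeed, $Y_1$ is smaller for $X$ near $0$, because $Y_1$ decreases in $|X|$, so $u=M_1e^{-Y_1}$ is decreasing in $|X|$. Pairing $X$ with $\pi-X$ then gives $(u(X)-u(\pi-X))\cos X\ge0$ for $X\in(0,\pi/2)$.

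Hence $t_{Y_1^0}(-\pi)>2\pi/\omega$, every particle undergoes a strictly positive drift, and by Lemma \ref{period} no path is closed.

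The step I expect to be delicate is verifying the monotonicity of $Y_1$ in $|X|$, which comes from the sign of $\dot Y_1$ combined with $\dot X<0$. This is where I would check the orientation carefully, reading it off the phase portrait in Figure \ref{figPP-up2}. If the orientation turns out reversed, the same pairing argument is applied to the symmetric expression from the other side.
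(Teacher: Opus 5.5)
There is a sign error at exactly the step you flagged as delicate, and the argument depends on it. On $(0,\pi)$ you have $\dot Y_1=-M_1e^{-Y_1}\sin X<0$ and $\dot X<0$, so $dY_1/dX=\dot Y_1/\dot X>0$. As $X$ runs from $\pi$ down to $0$ (forward in time), $Y_1$ therefore \emph{decreases}. So $Y_1(X)$ is \emph{increasing} in $|X|$, with its minimum at $X=0$. This is what the free-surface streamline $Y_1=-\mathfrak e_1\cos X$ requires. Having the maximum at $X=0$ is the lower-layer picture, and the reflection in \eqref{cotrans1} flips it.

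Consequently both inequalities in your comparison step ($Y_1\ge\mathcal Y$ for $|X|<\pi/2$, and $p\le q$) are the wrong way round. The ``main obstacle'' comes from this slip, not from any sign convention in $x=X/k+ct$. With the correct orientation you get $e^{-Y_1}\cos X\ge e^{-\mathcal Y}\cos X$ on both ranges of $X$, i.e. $\dot X\ge M_1e^{-\mathcal Y}\cos X-\omega$. This is precisely the analogue of \eqref{x>barx}. The comparison argument of Theorem \ref{nopersol} then gives $t_{Y_1^0}(-\pi)>2\pi/\sqrt{\omega^2-M_1^2e^{-2\mathcal Y}}>2\pi/\omega$, which is the paper's proof. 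Equally, in your paired integral one actually has $p\ge q$, so the integrand is at least $2\omega/(\omega^2-q^2\cos^2X)>2/\omega$, and you are already done at that point.

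Your final argument is the exact splitting $\tfrac12(\omega t-2\pi)=\omega^{-1}\int_0^\pi u\cos X\,dX+\int_0^\pi u^2\cos^2X\,[\omega(\omega-u\cos X)]^{-1}\,dX$, followed by pairing $X$ with $\pi-X$. It is a valid alternative, but only because $u=M_1e^{-Y_1}$ genuinely decreases in $|X|$. That follows from $Y_1$ \emph{increasing} in $|X|$, the opposite of what you asserted. As written, ``$Y_1$ is smaller for $X$ near $0$, because $Y_1$ decreases in $|X|$'' contradicts itself.

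Your fallback for a reversed orientation would not rescue anything. If $u$ increased in $|X|$, the first integral would be nonpositive, and nothing in the argument would control the sum. Note also that the conserved Hamiltonian $H_1$ plays no role: the splitting is pure algebra. Once the orientation is corrected, the detour through $H_1$ and the second-order expansion is unnecessary. The paper's comparison bound is shorter and also gives a quantitative lower bound on the period.
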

\begin{proof}
For $A=1$ we have $f(Y_1)=e^{-Y_1}=-g(Y_1)$, and it can easily be shown that the analogue of inequality \eqref{x>barx} holds for streamlines in the region $Y_1^0\in [\mathfrak e_1, kh_1+\mathfrak  e]$. Consequently,  
$
t_{Y_1^0}(-\pi)>{2\pi}\sqrt{\frac{1}{\omega^2-M_1^2f^2(\mathcal Y_1)}}>\frac{2\pi}{\omega}.
$
\end{proof}
Fluid particle motion matches that illustrated in schematic $(a)$ of Figure \ref{figPartUp1}, with all fluid particles experiencing a forward drift, and accordingly the following holds.

\begin{proposition}
Suppose $A=1$. The horizontal drift experienced by a fluid particle in the upper fluid layer over one wave period is decreasing with depth.
\end{proposition}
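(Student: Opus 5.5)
The plan is to use the drift formula for the upper layer together with the sign criterion \eqref{DDD}, specialised to $A=1$. For $A=1$ we have $f(Y_1)=e^{-Y_1}>0$ and $g(Y_1)=-e^{-Y_1}<0$ for all $Y_1$. In this case the $0$-isocline reduces to the vertical lines $X=0,\pm\pi$; there is no horizontal branch $Y_1=\bar{Y_1}$, since $\bar{Y_1}\to\infty$ as $A\to1$. Consequently every admissible streamline has the shape of schematic (a) of Figure \ref{figPartUp1}, and its drift $D(Y_1^0)>0$ by Theorem \ref{nopersol2}.

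Because of the reflection $y=h_1-Y_1/k$ in \eqref{revtrans1}, ``decreasing with depth'' means that $D$ is \emph{increasing} in $Y_1^0$. Equivalently, for $Y_1^1<Y_1^0$ we need $D(Y_1^0)-D(Y_1^1)<0$. By the limiting argument leading to \eqref{DDD}, it suffices to show that
\[
I:=2\int_0^\pi\frac{\omega M_1 g(Y_1)\cos X}{\bigl(\omega-M_1f(Y_1)\cos X\bigr)^2}\,dX
\]
is negative along each streamline $Y_1=Y_1(X)$. Here I must double-check the sign convention relating $D(Y_1^0)-D(Y_1^1)$ to depth. It should mirror the lower-layer Proposition \ref{propDrift}, where positive $I$ gave drift increasing with $Y$, i.e.\ towards the surface. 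In the upper layer larger $Y_1$ means deeper, so the target is $I<0$.

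The main obstacle is that $g\cos X$ changes sign at $X=\pi/2$, so positivity of the integrand cannot be used directly. I would handle this exactly as in the proof of Proposition \ref{propDrift}.

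First, since $\omega-M_1f\cos X\le\omega$ only when $\cos X\ge0$, I would split the integral into $[0,\pi/2]$ and $[\pi/2,\pi]$. On $[0,\pi/2]$ the integrand is $\le0$ and its denominator is smaller than $\omega^2$. On $[\pi/2,\pi]$ the integrand is $\ge0$ and its denominator exceeds $\omega^2$. Hence
\[
I<\frac{2M_1}{\omega}\int_0^\pi g(Y_1(X))\cos X\,dX .
\]

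Second, I would use the monotonicity of the streamline. With $\dot Y_1=M_1\sin X\,g(Y_1)<0$ for $X\in(0,\pi)$ and $\dot X<0$, the coordinate $Y_1$ increases as $X$ decreases from $\pi$ to $0$. Thus $Y_1(X)\ge\mathcal Y_1:=Y_1(\pi/2)$ on $[0,\pi/2]$ and $Y_1(X)\le\mathcal Y_1$ on $[\pi/2,\pi]$.

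Third, since $-g=e^{-Y_1}$ is decreasing, $g(Y_1(X))\ge g(\mathcal Y_1)$ on $[0,\pi/2]$, where $\cos X\ge0$. On $[\pi/2,\pi]$ we have $g(Y_1(X))\le g(\mathcal Y_1)$ with $\cos X\le0$. In both regions $g(Y_1)\cos X\ge g(\mathcal Y_1)\cos X$.

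This bound points the wrong direction for obtaining $I<0$. I therefore expect to redo the comparison with the roles of the two half-intervals chosen so that the inequality is consistent: either bound $-I$ from below, or compare with $g(\mathcal Y_1)\cos X$, which integrates to zero. Verifying the streamline orientation carefully is the crux.

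If this bookkeeping yields the desired sign, the conclusion follows by integrating the infinitesimal statement over $[\mathfrak e_1,kh_1+\mathfrak e]$. I would defer to the corresponding argument in \cite{HV-JDE} for the $A>1$ region of type (a), which is structurally identical.
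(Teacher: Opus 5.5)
Your reduction is the right one and is essentially what the paper has in mind: the paper gives no separate proof for this proposition and points to \cite{HV-JDE}, i.e.\ the argument of Proposition~\ref{propDrift} carried over to the upper layer via \eqref{DDD}. Since larger $Y_1$ is deeper, the target is indeed $I<0$. Your first step, $I<\frac{2M_1}{\omega}\int_0^\pi g(Y_1(X))\cos X\,dX$, is correct. The gap is in the only remaining step: you get it backwards and then leave it open.

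On $(0,\pi)$ you have $\dot Y_1=-M_1\sin X\,e^{-Y_1}<0$ \emph{and} $\dot X<0$, so $dY_1/dX=\dot Y_1/\dot X>0$. Hence as $X$ decreases from $\pi$ to $0$, the coordinate $Y_1$ \emph{decreases}, not increases. This agrees with the free-surface streamline $Y_1=-\mathfrak e_1\cos X$. It is lowest in $Y_1$ at $X=0$, which is the physical crest after the reflection in \eqref{revtrans1}, and it equals $\mathfrak e_1$ at the starting point $X=\pi$. Your claim that $Y_1\ge\mathcal Y_1$ on $[0,\pi/2]$ is therefore false. That is exactly why your comparison produced $\int_0^\pi g(Y_1)\cos X\,dX\ge 0$, which yields nothing.

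With the correct orientation, $Y_1(X)\le\mathcal Y_1$ on $[0,\pi/2]$ and $Y_1(X)\ge\mathcal Y_1$ on $[\pi/2,\pi]$. Since $g'=f=e^{-Y_1}>0$, this gives $g(Y_1(X))\cos X\le g(\mathcal Y_1)\cos X$ on \emph{both} half-intervals. Therefore $\int_0^\pi g(Y_1(X))\cos X\,dX\le g(\mathcal Y_1)\int_0^\pi\cos X\,dX=0$, and combined with your first step, $I<0$. So the drift decreases with depth.

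No reassignment of the half-intervals is needed. The comparison with $g(\mathcal Y_1)\cos X$ is the right one. What differs from the lower layer, where $dY/dX<0$ on $(0,\pi)$, is precisely the orientation of the streamlines in the reflected variable $Y_1$.

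As written, the sign of the decisive inequality is unresolved and the argument is deferred to \cite{HV-JDE}, so your proposal is an outline rather than a proof. Inserting the corrected monotonicity step completes it.
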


\section*{Acknowledgements}
DH and RI$^2$ would like to acknowledge the support of {\em Research Ireland} under grant number 21/FFP-A/9150. The authors would like to thank the referees for their constructive suggestions.

\end{document}

%% file: Fig1.pdf_tex
\begingroup%
  \makeatletter%
  \providecommand\color[2][]{%
    \errmessage{(Inkscape) Color is used for the text in Inkscape, but the package 'color.sty' is not loaded}%
    \renewcommand\color[2][]{}%
  }%
  \providecommand\transparent[1]{%
    \errmessage{(Inkscape) Transparency is used (non-zero) for the text in Inkscape, but the package 'transparent.sty' is not loaded}%
    \renewcommand\transparent[1]{}%
  }%
  \providecommand\rotatebox[2]{#2}%
  \newcommand*\fsize{\dimexpr\f@size pt\relax}%
  \newcommand*\lineheight[1]{\fontsize{\fsize}{#1\fsize}\selectfont}%
  \ifx\svgwidth\undefined%
    \setlength{\unitlength}{465.62735971bp}%
    \ifx\svgscale\undefined%
      \relax%
    \else%
      \setlength{\unitlength}{\unitlength * \real{\svgscale}}%
    \fi%
  \else%
    \setlength{\unitlength}{\svgwidth}%
  \fi%
  \global\let\svgwidth\undefined%
  \global\let\svgscale\undefined%
  \makeatother%
  \begin{picture}(1,0.51469493)%
    \lineheight{1}%
    \setlength\tabcolsep{0pt}%
    \put(0,0){\includegraphics[width=\unitlength,page=1]{Fig1.pdf}}%
    \put(0.19011275,0.49160867){\makebox(0,0)[lt]{\lineheight{0}\smash{\begin{tabular}[t]{l}{\large $y=h_1+\eta_1$}\end{tabular}}}}%
    \put(0.19004735,0.34991343){\makebox(0,0)[lt]{\lineheight{0}\smash{\begin{tabular}[t]{l}{\large $y=\eta$}\end{tabular}}}}%
    \put(0.9794442,0.44011431){\makebox(0,0)[lt]{\lineheight{0}\smash{\begin{tabular}[t]{l}$y=h_1$\end{tabular}}}}%
    \put(0.9794442,0.27256566){\makebox(0,0)[lt]{\lineheight{0}\smash{\begin{tabular}[t]{l}$y=0$\end{tabular}}}}%
    \put(0.98108765,0.01657416){\makebox(0,0)[lt]{\lineheight{0}\smash{\begin{tabular}[t]{l}$y=-h$\end{tabular}}}}%
    \put(0.65226988,0.38214441){\makebox(0,0)[lt]{\lineheight{0}\smash{\begin{tabular}[t]{l}{\large Density $\rho_1$}\end{tabular}}}}%
    \put(0.64398731,0.1551133){\makebox(0,0)[lt]{\lineheight{0}\smash{\begin{tabular}[t]{l}{\large Density $\rho=\rho_1(1+r)$}\end{tabular}}}}%
    \put(0,0){\includegraphics[width=\unitlength,page=2]{Fig1.pdf}}%
    \put(0.52038612,0.07768373){\makebox(0,0)[lt]{\lineheight{0}\smash{\begin{tabular}[t]{l}{\large $u$}\end{tabular}}}}%
    \put(0.41407793,0.19204556){\makebox(0,0)[lt]{\lineheight{0}\smash{\begin{tabular}[t]{l}{\large $v$}\end{tabular}}}}%
    \put(0.52199683,0.32412542){\makebox(0,0)[lt]{\lineheight{0}\smash{\begin{tabular}[t]{l}{\large $u_1$}\end{tabular}}}}%
    \put(0.39797066,0.38372247){\makebox(0,0)[lt]{\lineheight{0}\smash{\begin{tabular}[t]{l}{\large $v_1$}\end{tabular}}}}%
    \put(0.03904999,0.34222412){\makebox(0,0)[lt]{\lineheight{0}\smash{\begin{tabular}[t]{l}{\LARGE $\Omega_1$}\end{tabular}}}}%
    \put(0.04362377,0.12832999){\makebox(0,0)[lt]{\lineheight{0}\smash{\begin{tabular}[t]{l}{\LARGE $\Omega$}\end{tabular}}}}%
  \end{picture}%
\endgroup%

%% file: PP-Lower.pdf_tex
\begingroup%
  \makeatletter%
  \providecommand\color[2][]{%
    \errmessage{(Inkscape) Color is used for the text in Inkscape, but the package 'color.sty' is not loaded}%
    \renewcommand\color[2][]{}%
  }%
  \providecommand\transparent[1]{%
    \errmessage{(Inkscape) Transparency is used (non-zero) for the text in Inkscape, but the package 'transparent.sty' is not loaded}%
    \renewcommand\transparent[1]{}%
  }%
  \providecommand\rotatebox[2]{#2}%
  \newcommand*\fsize{\dimexpr\f@size pt\relax}%
  \newcommand*\lineheight[1]{\fontsize{\fsize}{#1\fsize}\selectfont}%
  \ifx\svgwidth\undefined%
    \setlength{\unitlength}{459.5260548bp}%
    \ifx\svgscale\undefined%
      \relax%
    \else%
      \setlength{\unitlength}{\unitlength * \real{\svgscale}}%
    \fi%
  \else%
    \setlength{\unitlength}{\svgwidth}%
  \fi%
  \global\let\svgwidth\undefined%
  \global\let\svgscale\undefined%
  \makeatother%
  \begin{picture}(1,0.61053104)%
    \lineheight{1}%
    \setlength\tabcolsep{0pt}%
    \put(0,0){\includegraphics[width=\unitlength,page=1]{PP-Lower.pdf}}%
    \put(0.44249232,-0.02724873){\makebox(0,0)[lt]{\lineheight{0}\smash{\begin{tabular}[t]{l}\Large{$X=0$}\end{tabular}}}}%
    \put(0.68241354,-0.02561659){\makebox(0,0)[lt]{\lineheight{0}\smash{\begin{tabular}[t]{l}\Large{$X=\frac{\pi}{2}$}\end{tabular}}}}%
    \put(0.92396671,-0.02398454){\makebox(0,0)[lt]{\lineheight{0}\smash{\begin{tabular}[t]{l}\Large{$X=\pi$}\end{tabular}}}}%
    \put(0.17972166,-0.02724864){\makebox(0,0)[lt]{\lineheight{0}\smash{\begin{tabular}[t]{l}\Large{$X=-\frac{\pi}{2}$}\end{tabular}}}}%
    \put(-0.03898196,-0.02724873){\makebox(0,0)[lt]{\lineheight{0}\smash{\begin{tabular}[t]{l}\Large{$X=-\pi$}\end{tabular}}}}%
    \put(-0.11405931,0.02824323){\makebox(0,0)[lt]{\lineheight{0}\smash{\begin{tabular}[t]{l}\Large{${Y=0}$}\end{tabular}}}}%
    \put(0,0){\includegraphics[width=\unitlength,page=2]{PP-Lower.pdf}}%
    \put(0.82767187,0.45259352){\color[rgb]{1,1,1}\makebox(0,0)[lt]{\lineheight{0}\smash{\begin{tabular}[t]{l}\Large{\textcolor{red}{Separatrix}}\end{tabular}}}}%
    \put(0.02349134,0.37832586){\makebox(0,0)[lt]{\lineheight{0}\smash{\begin{tabular}[t]{l}\Large{\textcolor{gray}{$\infty-$isocline}}\end{tabular}}}}%
    \put(0,0){\includegraphics[width=\unitlength,page=3]{PP-Lower.pdf}}%
    \put(1.00177647,0.03080063){\makebox(0,0)[lt]{\lineheight{0}\smash{\begin{tabular}[t]{l}\Large{${(y=-h)}$}\end{tabular}}}}%
    \put(0.52251083,0.30776718){\makebox(0,0)[lt]{\lineheight{0}\smash{\begin{tabular}[t]{l}\Large{$\textcolor{red}{Q=(0,Y^*)}$}\end{tabular}}}}%
    \put(0,0){\includegraphics[width=\unitlength,page=4]{PP-Lower.pdf}}%
    \put(-0.13585197,0.14731068){\makebox(0,0)[lt]{\lineheight{0}\smash{\begin{tabular}[t]{l}\textcolor{bleudefrance}{\Large{$Y=kh$}}\end{tabular}}}}%
    \put(1.0074611,0.14567854){\makebox(0,0)[lt]{\lineheight{0}\smash{\begin{tabular}[t]{l}\Large{$(y=0)$}\end{tabular}}}}%
    \put(-0.12430203,0.19188673){\makebox(0,0)[lt]{\lineheight{0}\smash{\begin{tabular}[t]{l}\Large{$\textcolor{red}{Y=\beta}$}\end{tabular}}}}%
    \put(0,0){\includegraphics[width=\unitlength,page=5]{PP-Lower.pdf}}%
  \end{picture}%
\endgroup%

%% file: Fig3-b.pdf_tex
\begingroup%
  \makeatletter%
  \providecommand\color[2][]{%
    \errmessage{(Inkscape) Color is used for the text in Inkscape, but the package 'color.sty' is not loaded}%
    \renewcommand\color[2][]{}%
  }%
  \providecommand\transparent[1]{%
    \errmessage{(Inkscape) Transparency is used (non-zero) for the text in Inkscape, but the package 'transparent.sty' is not loaded}%
    \renewcommand\transparent[1]{}%
  }%
  \providecommand\rotatebox[2]{#2}%
  \newcommand*\fsize{\dimexpr\f@size pt\relax}%
  \newcommand*\lineheight[1]{\fontsize{\fsize}{#1\fsize}\selectfont}%
  \ifx\svgwidth\undefined%
    \setlength{\unitlength}{227.59000397bp}%
    \ifx\svgscale\undefined%
      \relax%
    \else%
      \setlength{\unitlength}{\unitlength * \real{\svgscale}}%
    \fi%
  \else%
    \setlength{\unitlength}{\svgwidth}%
  \fi%
  \global\let\svgwidth\undefined%
  \global\let\svgscale\undefined%
  \makeatother%
  \begin{picture}(1,1.02335029)%
    \lineheight{1}%
    \setlength\tabcolsep{0pt}%
    \put(0,0){\includegraphics[width=\unitlength,page=1]{Fig3-b.pdf}}%
    \put(0.05325665,0.98277568){\makebox(0,0)[lt]{\lineheight{0}\smash{\begin{tabular}[t]{l}{\LARGE $\frac{a\phantom{_1}}{a_1}>0$}\end{tabular}}}}%
    \put(0.4837182,0.96260423){\makebox(0,0)[lt]{\lineheight{0}\smash{\begin{tabular}[t]{l}{\large $a_1$}\end{tabular}}}}%
    \put(0.47444989,0.65283668){\makebox(0,0)[lt]{\lineheight{0}\smash{\begin{tabular}[t]{l}{\large $a$}\end{tabular}}}}%
    \put(0.06117118,0.65919577){\makebox(0,0)[lt]{\lineheight{0}\smash{\begin{tabular}[t]{l}{\LARGE $\Omega_1$}\end{tabular}}}}%
    \put(0.06776198,0.23408923){\makebox(0,0)[lt]{\lineheight{0}\smash{\begin{tabular}[t]{l}{\LARGE $\Omega$}\end{tabular}}}}%
  \end{picture}%
\endgroup%

%% file: Fig3-a.pdf_tex
\begingroup%
  \makeatletter%
  \providecommand\color[2][]{%
    \errmessage{(Inkscape) Color is used for the text in Inkscape, but the package 'color.sty' is not loaded}%
    \renewcommand\color[2][]{}%
  }%
  \providecommand\transparent[1]{%
    \errmessage{(Inkscape) Transparency is used (non-zero) for the text in Inkscape, but the package 'transparent.sty' is not loaded}%
    \renewcommand\transparent[1]{}%
  }%
  \providecommand\rotatebox[2]{#2}%
  \newcommand*\fsize{\dimexpr\f@size pt\relax}%
  \newcommand*\lineheight[1]{\fontsize{\fsize}{#1\fsize}\selectfont}%
  \ifx\svgwidth\undefined%
    \setlength{\unitlength}{230.05817413bp}%
    \ifx\svgscale\undefined%
      \relax%
    \else%
      \setlength{\unitlength}{\unitlength * \real{\svgscale}}%
    \fi%
  \else%
    \setlength{\unitlength}{\svgwidth}%
  \fi%
  \global\let\svgwidth\undefined%
  \global\let\svgscale\undefined%
  \makeatother%
  \begin{picture}(1,0.99295889)%
    \lineheight{1}%
    \setlength\tabcolsep{0pt}%
    \put(0,0){\includegraphics[width=\unitlength,page=1]{Fig3-a.pdf}}%
    \put(0.46392815,0.75721688){\makebox(0,0)[lt]{\lineheight{0}\smash{\begin{tabular}[t]{l}{\large $a_1$}\end{tabular}}}}%
    \put(0.47044823,0.64311531){\makebox(0,0)[lt]{\lineheight{0}\smash{\begin{tabular}[t]{l}{\large $a$}\end{tabular}}}}%
    \put(0.20964464,0.96585976){\makebox(0,0)[lt]{\lineheight{0}\smash{\begin{tabular}[t]{l}{\LARGE $\frac{a\phantom{_1}}{a_1}<0$}\end{tabular}}}}%
    \put(0.08985497,0.67835181){\makebox(0,0)[lt]{\lineheight{0}\smash{\begin{tabular}[t]{l}{\LARGE $\Omega_1$}\end{tabular}}}}%
    \put(0.08659493,0.22846561){\makebox(0,0)[lt]{\lineheight{0}\smash{\begin{tabular}[t]{l}{\LARGE $\Omega$}\end{tabular}}}}%
  \end{picture}%
\endgroup%

%% file: PP-Upper-1.pdf_tex
\begingroup%
  \makeatletter%
  \providecommand\color[2][]{%
    \errmessage{(Inkscape) Color is used for the text in Inkscape, but the package 'color.sty' is not loaded}%
    \renewcommand\color[2][]{}%
  }%
  \providecommand\transparent[1]{%
    \errmessage{(Inkscape) Transparency is used (non-zero) for the text in Inkscape, but the package 'transparent.sty' is not loaded}%
    \renewcommand\transparent[1]{}%
  }%
  \providecommand\rotatebox[2]{#2}%
  \newcommand*\fsize{\dimexpr\f@size pt\relax}%
  \newcommand*\lineheight[1]{\fontsize{\fsize}{#1\fsize}\selectfont}%
  \ifx\svgwidth\undefined%
    \setlength{\unitlength}{491.11577072bp}%
    \ifx\svgscale\undefined%
      \relax%
    \else%
      \setlength{\unitlength}{\unitlength * \real{\svgscale}}%
    \fi%
  \else%
    \setlength{\unitlength}{\svgwidth}%
  \fi%
  \global\let\svgwidth\undefined%
  \global\let\svgscale\undefined%
  \makeatother%
  \begin{picture}(1,0.70809725)%
    \lineheight{1}%
    \setlength\tabcolsep{0pt}%
    \put(0,0){\includegraphics[width=\unitlength,page=1]{PP-Upper-1.pdf}}%
    \put(0.80575409,0.06937799){\color[rgb]{1,1,1}\makebox(0,0)[lt]{\lineheight{0}\smash{\begin{tabular}[t]{l}\Large{\textcolor{red}{Separatrix}}\end{tabular}}}}%
    \put(0.04566466,0.13122108){\makebox(0,0)[lt]{\lineheight{0}\smash{\begin{tabular}[t]{l}\Large{\textcolor{gray}{$\infty-$isocline}}\end{tabular}}}}%
    \put(0,0){\includegraphics[width=\unitlength,page=2]{PP-Upper-1.pdf}}%
    \put(0.96255133,0.38919872){\makebox(0,0)[lt]{\lineheight{0}\smash{\begin{tabular}[t]{l}\Large{${(y=h_1)}$}\end{tabular}}}}%
    \put(0.51869461,0.19876837){\makebox(0,0)[lt]{\lineheight{0}\smash{\begin{tabular}[t]{l}\Large{$\textcolor{red}{Q_1=(0,Y_1^*)}$}\end{tabular}}}}%
    \put(0,0){\includegraphics[width=\unitlength,page=3]{PP-Upper-1.pdf}}%
    \put(-0.08815797,0.39013655){\makebox(0,0)[lt]{\lineheight{0}\smash{\begin{tabular}[t]{l}\textcolor{azure}{\Large{$Y_1=0$}}\end{tabular}}}}%
    \put(0.96176177,0.35195825){\makebox(0,0)[lt]{\lineheight{0}\smash{\begin{tabular}[t]{l}\Large{$(y=h_1+a_1)$}\end{tabular}}}}%
    \put(0,0){\includegraphics[width=\unitlength,page=4]{PP-Upper-1.pdf}}%
    \put(-0.11853588,0.35571458){\makebox(0,0)[lt]{\lineheight{0}\smash{\begin{tabular}[t]{l}\textcolor{azure}{\Large{$Y_1=-\mathfrak e_1$}}\end{tabular}}}}%
    \put(0,0){\includegraphics[width=\unitlength,page=5]{PP-Upper-1.pdf}}%
    \put(0.96467623,0.51403779){\makebox(0,0)[lt]{\lineheight{0}\smash{\begin{tabular}[t]{l}\Large{$(y=0)$}\end{tabular}}}}%
    \put(0.96650142,0.54057348){\makebox(0,0)[lt]{\lineheight{0}\smash{\begin{tabular}[t]{l}\Large{$(y=-a)$}\end{tabular}}}}%
    \put(-0.12125119,0.51599654){\makebox(0,0)[lt]{\lineheight{0}\smash{\begin{tabular}[t]{l}\textcolor{bleudefrance}{\Large{$Y_1=kh_1$}}\end{tabular}}}}%
    \put(-0.1765559,0.54599828){\makebox(0,0)[lt]{\lineheight{0}\smash{\begin{tabular}[t]{l}\textcolor{bleudefrance}{\Large{$Y_1=kh_1+\mathfrak e$}}\end{tabular}}}}%
    \put(0,0){\includegraphics[width=\unitlength,page=6]{PP-Upper-1.pdf}}%
    \put(0.46061605,0.0002643){\makebox(0,0)[lt]{\lineheight{0}\smash{\begin{tabular}[t]{l}\Large{$X=0$}\end{tabular}}}}%
    \put(0.68052372,0.0002643){\makebox(0,0)[lt]{\lineheight{0}\smash{\begin{tabular}[t]{l}\Large{$X=\frac{\pi}{2}$}\end{tabular}}}}%
    \put(0.92791945,-0.00126281){\makebox(0,0)[lt]{\lineheight{0}\smash{\begin{tabular}[t]{l}\Large{$X=\pi$}\end{tabular}}}}%
    \put(0.21474752,0.00026435){\makebox(0,0)[lt]{\lineheight{0}\smash{\begin{tabular}[t]{l}\Large{$X=-\frac{\pi}{2}$}\end{tabular}}}}%
    \put(0.01011141,0.00179142){\makebox(0,0)[lt]{\lineheight{0}\smash{\begin{tabular}[t]{l}\Large{$X=-\pi$}\end{tabular}}}}%
    \put(0,0){\includegraphics[width=\unitlength,page=7]{PP-Upper-1.pdf}}%
    \put(-0.09265987,0.42786205){\makebox(0,0)[lt]{\lineheight{0}\smash{\begin{tabular}[t]{l}\textcolor{azure}{\Large{$Y_1=\mathfrak e_1$}}\end{tabular}}}}%
    \put(0.96304356,0.4217535){\makebox(0,0)[lt]{\lineheight{0}\smash{\begin{tabular}[t]{l}\Large{$(y=h_1-a_1)$}\end{tabular}}}}%
  \end{picture}%
\endgroup%

%% file: fY1a.pdf_tex
\begingroup%
  \makeatletter%
  \providecommand\color[2][]{%
    \errmessage{(Inkscape) Color is used for the text in Inkscape, but the package 'color.sty' is not loaded}%
    \renewcommand\color[2][]{}%
  }%
  \providecommand\transparent[1]{%
    \errmessage{(Inkscape) Transparency is used (non-zero) for the text in Inkscape, but the package 'transparent.sty' is not loaded}%
    \renewcommand\transparent[1]{}%
  }%
  \providecommand\rotatebox[2]{#2}%
  \newcommand*\fsize{\dimexpr\f@size pt\relax}%
  \newcommand*\lineheight[1]{\fontsize{\fsize}{#1\fsize}\selectfont}%
  \ifx\svgwidth\undefined%
    \setlength{\unitlength}{502.39578848bp}%
    \ifx\svgscale\undefined%
      \relax%
    \else%
      \setlength{\unitlength}{\unitlength * \real{\svgscale}}%
    \fi%
  \else%
    \setlength{\unitlength}{\svgwidth}%
  \fi%
  \global\let\svgwidth\undefined%
  \global\let\svgscale\undefined%
  \makeatother%
  \begin{picture}(1,0.49631819)%
    \lineheight{1}%
    \setlength\tabcolsep{0pt}%
    \put(0,0){\includegraphics[width=\unitlength,page=1]{fY1a.pdf}}%
    \put(0.97417876,0.00318533){\makebox(0,0)[lt]{\lineheight{0}\smash{\begin{tabular}[t]{l}\Large{$Y_1$}\end{tabular}}}}%
    \put(0.97600972,0.3855926){\makebox(0,0)[lt]{\lineheight{0}\smash{\begin{tabular}[t]{l}\Large{$f(Y_1)$}\end{tabular}}}}%
    \put(0.24830572,0.19893561){\makebox(0,0)[lt]{\lineheight{0}\smash{\begin{tabular}[t]{l}\Large{$f(-\mathfrak e_1)\approx A+\mathfrak e_1$}\end{tabular}}}}%
    \put(0.54206555,0.00404687){\makebox(0,0)[lt]{\lineheight{0}\smash{\begin{tabular}[t]{l}\Large{$\bar{Y_1}$}\end{tabular}}}}%
    \put(0.28861871,0.00404102){\makebox(0,0)[lt]{\lineheight{0}\smash{\begin{tabular}[t]{l}\Large{$0$}\end{tabular}}}}%
    \put(0,0){\includegraphics[width=\unitlength,page=2]{fY1a.pdf}}%
    \put(0.22797378,0.0050967){\makebox(0,0)[lt]{\lineheight{0}\smash{\begin{tabular}[t]{l}\Large{$-\mathfrak e_1$}\end{tabular}}}}%
    \put(0,0){\includegraphics[width=\unitlength,page=3]{fY1a.pdf}}%
    \put(0.90221641,0.0044194){\makebox(0,0)[lt]{\lineheight{0}\smash{\begin{tabular}[t]{l}\Large{${Y^*_1}$}\end{tabular}}}}%
    \put(0.0490851,0.00404687){\makebox(0,0)[lt]{\lineheight{0}\smash{\begin{tabular}[t]{l}\Large{$\tilde{Y^*_1}$}\end{tabular}}}}%
    \put(0.30757328,0.32356076){\makebox(0,0)[lt]{\lineheight{0}\smash{\begin{tabular}[t]{l}\Large{$1/\mathfrak e_1=f(\tilde{Y^*_1})=f(Y_1^*)$}\end{tabular}}}}%
    \put(0,0){\includegraphics[width=\unitlength,page=4]{fY1a.pdf}}%
    \put(0.74952858,0.00614145){\color[rgb]{0.12941176,0.67058824,0.80392157}\makebox(0,0)[lt]{\lineheight{0}\smash{\begin{tabular}[t]{l}\Large{$\#2:kh_1$}\end{tabular}}}}%
    \put(0.7504689,0.24258372){\color[rgb]{0.12941176,0.67058824,0.80392157}\makebox(0,0)[lt]{\lineheight{0}\smash{\begin{tabular}[t]{l}\Large{$f(kh_1)$}\end{tabular}}}}%
    \put(0,0){\includegraphics[width=\unitlength,page=5]{fY1a.pdf}}%
    \put(0.45001068,0.15221274){\color[rgb]{0,0.54117647,1}\makebox(0,0)[lt]{\lineheight{0}\smash{\begin{tabular}[t]{l}\Large{$f(kh_1)$}\end{tabular}}}}%
    \put(0.41887732,0.00507931){\color[rgb]{0.19215686,0.54901961,0.90588235}\makebox(0,0)[lt]{\lineheight{0}\smash{\begin{tabular}[t]{l}\Large{$\#1:kh_1$}\end{tabular}}}}%
    \put(0,0){\includegraphics[width=\unitlength,page=6]{fY1a.pdf}}%
  \end{picture}%
\endgroup%

%% file: PP-Upper.pdf_tex
\begingroup%
  \makeatletter%
  \providecommand\color[2][]{%
    \errmessage{(Inkscape) Color is used for the text in Inkscape, but the package 'color.sty' is not loaded}%
    \renewcommand\color[2][]{}%
  }%
  \providecommand\transparent[1]{%
    \errmessage{(Inkscape) Transparency is used (non-zero) for the text in Inkscape, but the package 'transparent.sty' is not loaded}%
    \renewcommand\transparent[1]{}%
  }%
  \providecommand\rotatebox[2]{#2}%
  \newcommand*\fsize{\dimexpr\f@size pt\relax}%
  \newcommand*\lineheight[1]{\fontsize{\fsize}{#1\fsize}\selectfont}%
  \ifx\svgwidth\undefined%
    \setlength{\unitlength}{483.46971887bp}%
    \ifx\svgscale\undefined%
      \relax%
    \else%
      \setlength{\unitlength}{\unitlength * \real{\svgscale}}%
    \fi%
  \else%
    \setlength{\unitlength}{\svgwidth}%
  \fi%
  \global\let\svgwidth\undefined%
  \global\let\svgscale\undefined%
  \makeatother%
  \begin{picture}(1,1.0646653)%
    \lineheight{1}%
    \setlength\tabcolsep{0pt}%
    \put(0,0){\includegraphics[width=\unitlength,page=1]{PP-Upper.pdf}}%
    \put(0.92635643,0.00357529){\makebox(0,0)[lt]{\lineheight{0}\smash{\begin{tabular}[t]{l}\Large{$X=\pi$}\end{tabular}}}}%
    \put(0.22813461,0.00546477){\makebox(0,0)[lt]{\lineheight{0}\smash{\begin{tabular}[t]{l}\Large{$X=-\frac{\pi}{2}$}\end{tabular}}}}%
    \put(-0.00084836,0.00576171){\makebox(0,0)[lt]{\lineheight{0}\smash{\begin{tabular}[t]{l}\Large{$X=-\pi$}\end{tabular}}}}%
    \put(0.70297115,0.0020238){\makebox(0,0)[lt]{\lineheight{0}\smash{\begin{tabular}[t]{l}\Large{$X=\frac{\pi}{2}$}\end{tabular}}}}%
    \put(0.45011147,0.00667773){\makebox(0,0)[lt]{\lineheight{0}\smash{\begin{tabular}[t]{l}\Large {$X=0$}\end{tabular}}}}%
    \put(-0.11110583,0.48519459){\makebox(0,0)[lt]{\lineheight{0}\smash{\begin{tabular}[t]{l}\Large{${Y_1=\bar {Y_1}}$}\end{tabular}}}}%
    \put(0.77371448,0.81490935){\makebox(0,0)[lt]{\lineheight{0}\smash{\begin{tabular}[t]{l}\Large{\textcolor{red}{Separatrix}}\end{tabular}}}}%
    \put(0.03206706,0.92367503){\makebox(0,0)[lt]{\lineheight{0}\smash{\begin{tabular}[t]{l}\Large{\textcolor{gray}{$\infty-$isocline}}\end{tabular}}}}%
    \put(0,0){\includegraphics[width=\unitlength,page=2]{PP-Upper.pdf}}%
    \put(-0.09340202,0.32084454){\makebox(0,0)[lt]{\lineheight{0}\smash{\begin{tabular}[t]{l}\textcolor{azure}{\Large{$Y_1=0$}}\\\end{tabular}}}}%
    \put(0,0){\includegraphics[width=\unitlength,page=3]{PP-Upper.pdf}}%
    \put(0.52446835,0.78022003){\makebox(0,0)[lt]{\lineheight{0}\smash{\begin{tabular}[t]{l}\Large{$\textcolor{red}{Q_1=(0,Y_1^*)}$}\end{tabular}}}}%
    \put(-0.11645334,0.2789597){\makebox(0,0)[lt]{\lineheight{0}\smash{\begin{tabular}[t]{l}\textcolor{azure}{\Large{$Y_1=-\mathfrak e_1$}}\end{tabular}}}}%
    \put(0.9797487,0.27895988){\makebox(0,0)[lt]{\lineheight{0}\smash{\begin{tabular}[t]{l}\Large{$(y=h_1+a_1)$}\end{tabular}}}}%
    \put(0.97654919,0.32084454){\makebox(0,0)[lt]{\lineheight{0}\smash{\begin{tabular}[t]{l}\Large{$(y=h_1)$}\end{tabular}}}}%
    \put(0,0){\includegraphics[width=\unitlength,page=4]{PP-Upper.pdf}}%
    \put(-0.09374154,0.3642806){\makebox(0,0)[lt]{\lineheight{0}\smash{\begin{tabular}[t]{l}\textcolor{azure}{\Large{$Y_1=\mathfrak e_1$}}\end{tabular}}}}%
    \put(0.97810037,0.36428069){\makebox(0,0)[lt]{\lineheight{0}\smash{\begin{tabular}[t]{l}\Large{$(y=h_1-a_1)$}\end{tabular}}}}%
    \put(0,0){\includegraphics[width=\unitlength,page=5]{PP-Upper.pdf}}%
    \put(0.5313785,0.11142079){\makebox(0,0)[lt]{\lineheight{0}\smash{\begin{tabular}[t]{l}\Large{$\textcolor{red}{\tilde{Q_1}=(0,\tilde{Y_1^*})}$}\end{tabular}}}}%
    \put(0,0){\includegraphics[width=\unitlength,page=6]{PP-Upper.pdf}}%
    \put(-0.27869006,0.42320239){\makebox(0,0)[lt]{\lineheight{0}\smash{\begin{tabular}[t]{l}\textcolor{bleudefrance}{\Large{\#1:} \Large{$Y_1=kh_1\leq\bar{Y_1}$}}\end{tabular}}}}%
    \put(-0.27988369,0.55506161){\makebox(0,0)[lt]{\lineheight{0}\smash{\begin{tabular}[t]{l}\textcolor{ballblue}{\Large{\#2:} \Large{$Y_1=kh_1>\bar{Y_1}$}}\end{tabular}}}}%
    \put(0.97465255,0.42165099){\makebox(0,0)[lt]{\lineheight{0}\smash{\begin{tabular}[t]{l}\Large{$(y=0)$}\end{tabular}}}}%
    \put(0.97620386,0.55661301){\makebox(0,0)[lt]{\lineheight{0}\smash{\begin{tabular}[t]{l}\Large{$(y=0)$}\end{tabular}}}}%
    \put(0,0){\includegraphics[width=\unitlength,page=7]{PP-Upper.pdf}}%
    \put(0.7909321,0.11294499){\makebox(0,0)[lt]{\lineheight{0}\smash{\begin{tabular}[t]{l}\Large{\textcolor{gray}{$\infty-$isocline}}\end{tabular}}}}%
    \put(0,0){\includegraphics[width=\unitlength,page=8]{PP-Upper.pdf}}%
    \put(0.0650512,0.09134823){\makebox(0,0)[lt]{\lineheight{0}\smash{\begin{tabular}[t]{l}\Large{\textcolor{red}{Separatrix}}\end{tabular}}}}%
  \end{picture}%
\endgroup%

%% file: PartLower.pdf_tex
\begingroup%
  \makeatletter%
  \providecommand\color[2][]{%
    \errmessage{(Inkscape) Color is used for the text in Inkscape, but the package 'color.sty' is not loaded}%
    \renewcommand\color[2][]{}%
  }%
  \providecommand\transparent[1]{%
    \errmessage{(Inkscape) Transparency is used (non-zero) for the text in Inkscape, but the package 'transparent.sty' is not loaded}%
    \renewcommand\transparent[1]{}%
  }%
  \providecommand\rotatebox[2]{#2}%
  \newcommand*\fsize{\dimexpr\f@size pt\relax}%
  \newcommand*\lineheight[1]{\fontsize{\fsize}{#1\fsize}\selectfont}%
  \ifx\svgwidth\undefined%
    \setlength{\unitlength}{160.83930588bp}%
    \ifx\svgscale\undefined%
      \relax%
    \else%
      \setlength{\unitlength}{\unitlength * \real{\svgscale}}%
    \fi%
  \else%
    \setlength{\unitlength}{\svgwidth}%
  \fi%
  \global\let\svgwidth\undefined%
  \global\let\svgscale\undefined%
  \makeatother%
  \begin{picture}(1,0.63455948)%
    \lineheight{1}%
    \setlength\tabcolsep{0pt}%
    \put(0,0){\includegraphics[width=\unitlength,page=1]{PartLower.pdf}}%
    \put(0.36693295,0.01471217){\makebox(0,0)[lt]{\lineheight{0}\smash{\begin{tabular}[t]{l}$\Large{A}$\end{tabular}}}}%
    \put(0.44295143,0.01596661){\color[rgb]{1,0,0}\makebox(0,0)[lt]{\lineheight{0}\smash{\begin{tabular}[t]{l}$\Large{B}$\end{tabular}}}}%
    \put(0.52043736,0.01514237){\color[rgb]{0,0,1}\makebox(0,0)[lt]{\lineheight{0}\smash{\begin{tabular}[t]{l}$\Large{C}$\end{tabular}}}}%
    \put(0.59709844,0.01679111){\makebox(0,0)[lt]{\lineheight{0}\smash{\begin{tabular}[t]{l}$\Large{D}$\end{tabular}}}}%
  \end{picture}%
\endgroup%

%% file: PartUp1.pdf_tex
\begingroup%
  \makeatletter%
  \providecommand\color[2][]{%
    \errmessage{(Inkscape) Color is used for the text in Inkscape, but the package 'color.sty' is not loaded}%
    \renewcommand\color[2][]{}%
  }%
  \providecommand\transparent[1]{%
    \errmessage{(Inkscape) Transparency is used (non-zero) for the text in Inkscape, but the package 'transparent.sty' is not loaded}%
    \renewcommand\transparent[1]{}%
  }%
  \providecommand\rotatebox[2]{#2}%
  \newcommand*\fsize{\dimexpr\f@size pt\relax}%
  \newcommand*\lineheight[1]{\fontsize{\fsize}{#1\fsize}\selectfont}%
  \ifx\svgwidth\undefined%
    \setlength{\unitlength}{160.91408157bp}%
    \ifx\svgscale\undefined%
      \relax%
    \else%
      \setlength{\unitlength}{\unitlength * \real{\svgscale}}%
    \fi%
  \else%
    \setlength{\unitlength}{\svgwidth}%
  \fi%
  \global\let\svgwidth\undefined%
  \global\let\svgscale\undefined%
  \makeatother%
  \begin{picture}(1,1.30319673)%
    \lineheight{1}%
    \setlength\tabcolsep{0pt}%
    \put(0,0){\includegraphics[width=\unitlength,page=1]{PartUp1.pdf}}%
    \put(0.34243498,0.91861117){\makebox(0,0)[lt]{\lineheight{0}\smash{\begin{tabular}[t]{l}$\Large{A}$\end{tabular}}}}%
    \put(0.41841813,0.91986503){\color[rgb]{1,0,0}\makebox(0,0)[lt]{\lineheight{0}\smash{\begin{tabular}[t]{l}$\Large{B}$\end{tabular}}}}%
    \put(0.49586806,0.91904118){\color[rgb]{0,0,1}\makebox(0,0)[lt]{\lineheight{0}\smash{\begin{tabular}[t]{l}$\Large{C}$\end{tabular}}}}%
    \put(0.57249352,0.92068914){\makebox(0,0)[lt]{\lineheight{0}\smash{\begin{tabular}[t]{l}$\Large{D}$\end{tabular}}}}%
    \put(0,0){\includegraphics[width=\unitlength,page=2]{PartUp1.pdf}}%
    \put(0.36235394,0.45038632){\makebox(0,0)[lt]{\lineheight{0}\smash{\begin{tabular}[t]{l}$\Large{A}$\end{tabular}}}}%
    \put(0.43833764,0.45164018){\color[rgb]{1,0,0}\makebox(0,0)[lt]{\lineheight{0}\smash{\begin{tabular}[t]{l}$\Large{B}$\end{tabular}}}}%
    \put(0.51578757,0.45081633){\color[rgb]{0,0,1}\makebox(0,0)[lt]{\lineheight{0}\smash{\begin{tabular}[t]{l}$\Large{C}$\end{tabular}}}}%
    \put(0.59241247,0.4524643){\makebox(0,0)[lt]{\lineheight{0}\smash{\begin{tabular}[t]{l}$\Large{D}$\end{tabular}}}}%
    \put(0,0){\includegraphics[width=\unitlength,page=3]{PartUp1.pdf}}%
    \put(0.35003006,0.68856013){\makebox(0,0)[lt]{\lineheight{0}\smash{\begin{tabular}[t]{l}$\Large{A}$\end{tabular}}}}%
    \put(0.42601321,0.68981399){\color[rgb]{1,0,0}\makebox(0,0)[lt]{\lineheight{0}\smash{\begin{tabular}[t]{l}$\Large{B}$\end{tabular}}}}%
    \put(0.50346314,0.68899014){\color[rgb]{0,0,1}\makebox(0,0)[lt]{\lineheight{0}\smash{\begin{tabular}[t]{l}$\Large{C}$\end{tabular}}}}%
    \put(0.5800886,0.6906381){\makebox(0,0)[lt]{\lineheight{0}\smash{\begin{tabular}[t]{l}$\Large{D}$\end{tabular}}}}%
    \put(0,0){\includegraphics[width=\unitlength,page=4]{PartUp1.pdf}}%
    \put(-0.09134065,1.12142265){\makebox(0,0)[lt]{\lineheight{0}\smash{\begin{tabular}[t]{l}$\Large{(a)}$\end{tabular}}}}%
    \put(-0.09510907,0.76018672){\makebox(0,0)[lt]{\lineheight{0}\smash{\begin{tabular}[t]{l}$\Large{(b)}$\end{tabular}}}}%
    \put(-0.09977004,0.20321272){\makebox(0,0)[lt]{\lineheight{0}\smash{\begin{tabular}[t]{l}$\Large{(c)}$\end{tabular}}}}%
  \end{picture}%
\endgroup%